\documentclass[12pt]{article}
\usepackage{amsfonts}

\usepackage{graphicx}
\usepackage{amsmath}


\newtheorem{theorem}{Theorem}

\newtheorem{corollary}[theorem]{Corollary}

\newtheorem{definition}[theorem]{Definition}

\newtheorem{lemma}[theorem]{Lemma}

\newtheorem{proposition}[theorem]{Proposition}
\newtheorem{remark}[theorem]{Remark}

\newenvironment{proof}[1][Proof]{\textbf{#1.} }{\ \rule{0.5em}{0.5em}}

\oddsidemargin 0.0in \textwidth 6.5in



\begin{document}

\title{{\LARGE The killed Brox diffusion.
\footnote{This research was partially supported by the CONACYT and
the Universidad de Costa Rica.}}}
\author{
Jonathan Gutierrez-Pav\'{o}n
\thanks{Departamento de Matematicas, CINVESTAV-IPN, A. Postal 14-740,
Mexico D.F. 07000, MEXICO. Email: jjgutierrez@math.cinvestav.mx} and
Carlos G. Pacheco
\thanks{Email: cpacheco@math.cinvestav.mx}}

\date{}
\maketitle

\begin{abstract}
We carry out an study of the Brox diffusion with killing. It turns
out that when leaving fixed the environment one is able to recast
some theory of diffusion and differential operators to deal with the
ill-posed generator of the Brox diffusion. Our first main result is
to give a close form of the Green operator associated to the
generator, i.e. the inverse of the generator. We do so by setting
the Lagrange identity in this context. Then, we give explicit
expressions in quenched form of the probability density function of
the process; such object is given in terms of the spectral
decomposition using the eigenvalues and eigenfuntions of the
infinitesimal generator of the diffusion. Moreover, we characterize
the eigenvalues and eigenfuntions using some parsimonious stochastic
differential equations. This program is carried out using the theory
of Sturm-Liouville, which in fact we have adapted to deal with the
ill-posed random operators.
\end{abstract}

{\bf 2000 Mathematics Subject Classification: 60K37, 60H25, 34L10,
45C05}
\\

\textbf{Keywords: Brox diffusion, probability density, spectral
decomposition, Green operator, Sturm-Liouville theory}.


\section{Introduction}

Probabilistic models with random environment are important models to capture some kind of external randomness that there exists in the medium. 
From this type of modelling different mathematical tools have been
deviced to analyze their complexities; probably a good starting
point to initiate being acquainted with the random-environment
phenomena is the book by B.D. Hughes \cite{Hughes}.

A model that has been consider to a study a particle moving in a
potential over a finite set is given by the following random
operator, see e.g. \cite{Halperin, Fuku},
\begin{equation*}
[Lf](x)=\frac{1}{2}\left\{f^{\prime\prime}(x)-V(x)f(x) \right\}, \
x\in [0,1] ,
\end{equation*}
where $V$ represent a white-noise potential. In Halperin
\cite{Halperin} it is called the Schr\"{o}dinger particle.

On the other hand, a popular model with random medium is what some
people call the Sinai's random walk, or also called the Temkin's
model, which run in discrete time and discrete state space. The
continuous time-space idealization of the Temkin's model is what we
call the Brox diffusion, precisely because TH. Brox carefully
studied it in \cite{Brox} in 1989, and proved the same asymptotic
behaviour as Y.G. Sinai did for the Temkin's model in 1982. In
\cite{Schumacher}, this model was studied as a diffusion with random
coefficients.

Different studies have been carried out to have more understanding
of the Brox diffusion, let us give a brief account. For instance in
\cite{Kawazu92} it is analyzed the limit behaviour of the process as
time evolves. Asymptotic behaviour regarding the first passage time
have been also studied in \cite{Kawazu97}. In \cite{Shi} it is
studied some asymptotics of the local time, and in \cite{Hu} more
ideas on sample path asymptotic are carried out. Interesting
formulas were discovered in \cite{Cheliotis05} in connection with
functional of the environment. Yet, without conditioning on the
environment more asymptotic information about the process was found
in \cite{Talet} when there is a drift in the environment. In
\cite{Cheliotis08} one finds some understanding on the paths
behaviour. In \cite{Kasahara}, limit behaviour about occupation time
is worked out. A relevant detail analysis on asymptotic dynamics of
the local time is done in \cite{Andreoletti}, and more recently in
\cite{HuLe} it is proposed stochastic differential equations driven
by the Brox process. Also, in \cite{Flandoli} one can find some
similar analysis for generators than our, without proposing explicit
invertible operators. We also mention an interesting study in
\cite{Bovier} of the Sinai's walk truncated in a finite interval,
where the authors also carry out an analysis of the eigenvalues and
the relation with the local minimum in the potential.

Strictly speaking the Brox diffusion becomes a diffusion only after
freezing an environment (i.e. conditioning on one realization of the
environment), which gives rise to the so-called quenched case, and
without conditioning it is called the annealed case. Informally
speaking, the generator $L$ acting on $f$ has the form
\begin{equation*}
[Lf](x)=\frac{1}{2}\left\{f^{\prime\prime}(x)-W^{\prime}(x)f^{\prime}(x)
\right\},
\end{equation*}
where $W$ represents the Brownian motion. Since in the quenched case
one is dealing with a bona fide diffusion, the whole apparatus of
diffusions, for instance the one manufactured by K. It\^{o} and H.P.
McKean \cite{ItoMcKean}, can be used for this model. Moreover, one
knows that behind the theory of diffusions it is lurking the
Sturm-Liouville theory of second order linear operators.

Although the generator of the Brox diffussion does not have
differentiable coefficients as the classical ones,
one can adapt many of the results in second order operators, which
helps to say a lot about the generator, and ultimately on the
stochastic process. It turns out that one can deal with the
generator using the inner product to define it rigourously, this
kind idea have been use before, say in \cite{Mathieu}, to define the
so-called Dirichlet forms associated to the generator.

Due to the importance of analyzing models with random medium in a
finite state space, we propose in this article to build the killed
version of the Brox diffusion, which ends up having a finite state
space. This version of the Brox process allows us to recast results
from the theory of Sturm-Liouville and analyze the generator in such
a way that it becomes feasible to write down an spectral
representation of the probability density function. Such
respresentation, as one might expect, is in terms of the
eigenfunction and eigenvalues of the generator. It should be mention
that an important tool in this analysis is the so-called Green
operator, that is the inverse of the generator. We are able to find
explicitly the Green operator, which become of tremendous help at
the time to analyze the eigenfunctions of the generator. At this
stage we have concentrated to the quenched case, leaving for further
investigation the annealed case.

Let us explain how this work is organized. In the coming section we
present the original construction of the Brox diffusion, making
emphasis on the domain of the generator, which we need to know for
our purposes. In section 3 we build the killed Brox process and give
results on its generator, in particular we make use of what people
call the Wronskian to analyze solutions of certain related equation.
In Section 4 we find the Green operator; additionally we need to
stablish the validity of the Lagrange identity in this context.
Section 5 has the spectral representation of the density function,
which is in terms of the eigenvalues and eigenfunctions. Precisely,
Section 6 has a dissection of the eigenfunctions and eigenvalues,
providing a way to obtain these elements. In doing so, we establish
a version of an oscillation theorem suited for our generator. At the
end of Section 6, we also use the Ricatti transformation to provide
yet another stochastic differential equation which is of interest.
There is one appendix at the end with the proof using the classic
Pr\"{u}fer method to study zeros of eigenfunctions, again, such
proof suited for our generator.

\section{Brox diffusion}

The Brox diffusion is usually described with the following
stochastic differential equation
\begin{equation} \label{ecuacion informal de brox}
dX_{t}=-\frac{1}{2}W'(X_{t})dt+dB_{t},
\end{equation}
where $B:= \{B_{t}: t\geq 0  \}$ is the standard Brownian motion,
and $W:= \{ W(x): x \in \mathbb{R} \}$ is a two sided Brownian
motion, and they both are independent from each other. Here $W'$
denotes the derivative of $W$, sometimes called the white noise.

The expression (\ref{ecuacion informal de brox}) needs to have a
rigorous meaning. It happens that one can use the associated
generator in order to construct properly the process. When looking
the equation (\ref{ecuacion informal de brox}) one could say that
the process $X:= \{ X_{t}:t\geq 0 \}$ has associated the
infinitesimal operator given by
\begin{equation*} \label{brox operator informal}
Lf(x):=\displaystyle\frac{1}{2} e^{W(x)}\frac{d}{dx} \left(
e^{-W(x)}\frac{df(x)}{dx}\right).
\end{equation*}
This corresponds to considering the scale function
\begin{equation}\label{brox funcion escala}
s(x):=\displaystyle\int_{0}^{x}e^{W(y)}dy,
\end{equation}
and the speed measure
\begin{equation}\label{broz medida velocidad}
m(A):=\displaystyle\int_{A} 2e^{-W(y)}dy, \; \; \mbox{for Borel
sets}\; A \subseteq \mathbb{R}.
\end{equation}

Then one considers rigourously the operator
$$\displaystyle \frac{d}{dm}\frac{d}{ds}f.$$
To learn about the derivatives with respect functions or measures we
refer the reader to \cite{Revuz, ItoMcKean, GP}. We want to see that
the process $X$ associated with this operator is indeed a diffusion,
when leaving fixed $W$. One can appeal to the theory in
\cite{ItoMcKean} to build a process $Y$ in natural scale through the
speed measure $m_{Y}$ and the local time. The procedure is done in
the following way:
\begin{equation*}
Y_{t}=B_{T^{-1}_{t}},
\end{equation*}
where
\begin{equation*}\label{T_{t}}
T_{t}:= \displaystyle
\frac{1}{2}\int_{-\infty}^{\infty}L_{t}(x)m_{Y}(dx).
\end{equation*}

The process $Y_{t}:=s(X_{t})$ is in natural scale. Now, we consider
the problem $3.18$, p.$310$ from \cite{Revuz}, and we obtain that
the new scale function and speed measure of the process $Y:=s(X)$
are
$$s_{Y}(x)=x,\text{ and }$$
$$m_Y(dx)=2e^{-2W(s^{-1}(x))}dx.$$

Then we define the Brox process as
$$X_{t}=s^{-1}(B_{T^{-1}_{t}}),$$
where
$$T_{t}:=\displaystyle\int_{0}^{t}e^{-2W(s^{-1}(B_{u}))}du.$$

Let us now describe the domain of the generator.
\begin{theorem}
For any environment $W$, the domain $D(L)$ is contained in the space
of differential functions $C^{1}(\mathbb{R}).$ Moreover the
derivative of a function $f$ in $D(L)$ takes the form
$f'(x)=e^{W(x)}g(x)$ with $g \in C^{1}(\mathbb{R}).$
\end{theorem}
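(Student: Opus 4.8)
The plan is to unwind the definition of $D(L)$ in terms of the scale and speed derivatives, and then exploit the continuity of the Brownian sample path $W$ to upgrade the resulting integral representations to $C^{1}$ regularity. Recall that $Lf=\frac{d}{dm}\frac{d}{ds}f$, so that membership $f\in D(L)$ means precisely that the scale derivative $g:=\frac{df}{ds}$ exists as a function and that $\frac{dg}{dm}=Lf$ exists and is continuous. The whole argument is then a two-stage application of the fundamental theorem of calculus adapted to the derivatives with respect to $s$ and $m$.

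First I would record the fundamental theorem for the speed derivative. Since $Lf$ is continuous, one has
$$g(x)=g(0)+\int_{0}^{x}(Lf)(y)\,m(dy)=g(0)+2\int_{0}^{x}(Lf)(y)\,e^{-W(y)}\,dy.$$
The key point here is that $W$ has continuous sample paths, so $e^{-W(y)}$ is continuous and the integrand $(Lf)(y)e^{-W(y)}$ is continuous; differentiating shows that $g$ is continuously differentiable with $g'(x)=2(Lf)(x)e^{-W(x)}$. In particular $g\in C^{1}(\mathbb{R})$, which already establishes the \emph{moreover} assertion. Next I would apply the analogous fundamental theorem for the scale derivative: since $g=\frac{df}{ds}$ is now known to be continuous, $f$ is recovered by integrating against $ds$, namely
$$f(x)=f(0)+\int_{0}^{x}g(y)\,ds(y)=f(0)+\int_{0}^{x}g(y)\,e^{W(y)}\,dy.$$
Once more the continuity of $W$ makes $g(y)e^{W(y)}$ continuous, so $f\in C^{1}(\mathbb{R})$ and $f'(x)=e^{W(x)}g(x)$ with $g\in C^{1}(\mathbb{R})$. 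This is exactly the claimed form of the derivative, and it simultaneously yields the inclusion $D(L)\subseteq C^{1}(\mathbb{R})$.

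The main obstacle is not the continuity bookkeeping, which is routine once $W$ is known to be continuous, but rather the careful passage from the abstract definition of the domain to the two integral representations displayed above. Concretely, one must invoke the precise meaning of $\frac{d}{dm}\frac{d}{ds}$ from \cite{ItoMcKean, Revuz} to guarantee that $\frac{df}{ds}$ exists as a genuine continuous function (and not merely as a one-sided or almost-everywhere limit), so that the scale fundamental theorem is legitimately applicable in the second step. Making this justification clean, while keeping the hypothesis as weak as ``for any environment $W$'', is where the argument requires the most care; everything downstream reduces to the continuity of $e^{\pm W}$.
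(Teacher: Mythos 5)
Your argument is correct and follows essentially the same route as the paper: the paper cites Mandl's representation $f(x)=\int_a^x\int_a^y (Lf)(z)\,dm(z)\,ds(y)+f(a)+[s(x)-s(a)]\frac{df}{ds}(a)$, which is exactly your two-stage fundamental theorem of calculus for $\frac{d}{dm}$ and $\frac{d}{ds}$ composed into a single formula, and then differentiates using the continuity of $e^{\pm W}$ just as you do. Your $g=\frac{df}{ds}$ coincides with the function $g$ appearing in the paper's formula for $f'$, so the two proofs are the same argument presented in slightly different order.
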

\begin{proof}
According to Mandl \cite{Mandl}, p.$22$, if $h(x):=Lf(x)$ for $f \in
D(L)$, then
\begin{equation}\label{1}
f(x)=\displaystyle\int_{a}^{x}
\int_{a}^{y}h(z)dm(z)ds(z)+f(a)+\left[s(x)-s(a) \right]
\frac{df}{ds}(a),
\end{equation}
where
\begin{equation}\label{2}
\displaystyle\frac{df}{ds}(a)= \displaystyle \lim_{h \rightarrow
0}\frac{f(a+h)-f(a)}{s(a+h)-s(a)}=
\displaystyle\frac{f'(a)}{e^{W(a)}}.
\end{equation}

Thus, using (\ref{1}) and (\ref{2}), as well as the speed measure
(\ref{broz medida velocidad}) and the scale function (\ref{brox
funcion escala}), we have that
\begin{equation} \label{mandl f}
f(x)=2\displaystyle\int_{a}^{x}
\int_{a}^{y}h(z)e^{-W(z)}e^{W(y)}dzdy+f(a)+\displaystyle\frac{f'(a)}{e^{W(a)}}
\int_{a}^{x}e^{W(z)}dz.
\end{equation}

Therefore we obtain that $f \subseteq C^{1}(\mathbb{R}).$ Now, if we
calculate explicitly the derivative of $f$, we arrive to
\begin{equation} \label{mandl derivada de f}
f'(x)= e^{W(x)}\cdot \left[
2\displaystyle\int_{a}^{x}h(z)e^{-W(z)}dz +
\displaystyle\frac{f'(a)}{e^{W(a)}} \right].
\end{equation}

Then we have that $f'(x)=e^{W(x)} g(x) $ with $g\in
C^{1}(\mathbb{R}).$
\end{proof}

We use the previous construction of the Brox process for construct
the Brox process with killing in a and b.

\section{Brox process with killing}

Let $a<b$. We first consider the Brownian motion with killing in
$s(a)$ and $s(b)$, where $s$ is the scale function of the Brox
process, i.e.
$$s(x):=\displaystyle\int_{0}^{x}e^{W(y)}dy.$$
Since $s$ is non-drecreasing, $s(a)<s(b)$.

Hence, according to \cite{Borodin}, p.$105$, the infinitesimal
operator associated to the Brownian motion with killing at $s(a)$
and $s(b)$ is
$$L_{B}f(x)=\displaystyle\frac{1}{2}f''(x), \; \; s(a)<x<s(b),$$
where the domain of this operator is
$$D(L_{B})= \{f: L_{B}f \in C_{b}([s(a),s(b)]), f(s(a))=f(s(b))=0   \}.$$

We use this Brownian with killing and the construction in the
previous section to construct the Brox process with killing. We
consider the following idea, taken from \cite{Brox}, p.$1216$.

Start with the Brownian motion with killing in $s(a)$ and $s(b)$,
denote it by $\bar{B}$. Now, we consider
$$\bar{T}_{t}:=\displaystyle\int_{0}^{t}e^{-2W(s^{-1}(\bar{B}_{u}))}du,$$

Then the new process $\bar{X}$ defined as
$$\bar{X_{t}}=s^{-1}(\bar{B}_{\bar{\gamma}_{t}}),$$
is the Brox process with killing in $a$ and $b$. Where $s$ is the
scale function associated to the Brox process, and $\bar{\gamma}$ is
the inverse of $\bar{T}$.

Thus, the infinitesimal operator associated to $\bar{X}$ is
$$\bar{L}f(x)=\displaystyle\frac{d}{dm}\frac{d}{ds}f(x),$$
where $m$ and $s$ are the speed measure and the scale function of
the Brox process, and from general theory of diffusions, see e.g.
\cite{ItoMcKean, Borodin}, we know that the domain of $\bar{L}$ is
the set of functions in the domain of the generator of the Brox
process which are zero at $a$ and $b$. Therefore, the domain
$D(\bar{L})$ satisfies the following conditions:
\begin{itemize}
    \item $f(a)=0,$
    \item $f(b)=0,$
    \item $f'(x)= e^{W(x)} g(x), \ g\in C^{1}(\mathbb{R}).$
\end{itemize}

In general, $\bar{L}f$ is well defined whenever
$e^{-W(x)}f^{\prime}(x)$ is still differentiable, we denote this as
\begin{equation*}
D_{0}=\left\{ f\ \text{ differentiable}: e^{-W(x)}f^{\prime}(x)
\text{ is differentiable}  \right\}.
\end{equation*}
Now we present a couple of results regarding the generator and
solutions of the so-called eigenfunction equation. These results
will be very useful for the rest of the paper. In the following
result we regard $\bar{L}$ as an operator acting not just on
$D(\bar{L})$ but on any function in $D_{0}$. Abusing the notation,
sometimes we write $f(x,\lambda)$ in liue of $f(x)$ to emphazise the
dependence with the spectral parameter $\lambda$.

\begin{proposition}\label{PropPsiEq}
i) The operator $\bar{L}$
can be applied as
$$\bar{L}f(x)= \displaystyle\frac{e^{W(x)}}{2}\left(e^{-W(x)}f'(x) \right)',$$
for any differentiable functions $f\in D_{0}$. 
\\ ii) For any $\lambda>0$, the problem
$$\bar{L}g+\lambda g=0, \text{ given } g(a,\lambda)=0 \text{ and }
g^{\prime}(a,\lambda)=1,$$ admits a solution in $D_{0}$, where $g^{\prime}$ means the derivative with respect to $x$.
Moreover, such solution satisfies the equation
\begin{equation}\label{ecuacion integral de g}
g(x,\lambda)=-2\lambda
\displaystyle\int_{a}^{x}\int_{a}^{y}g(z,\lambda)e^{-W(z)}e^{W(y)}dzdy
+ \displaystyle\frac{1}{e^{W(a)}} \int_{a}^{x}e^{W(z)}dz.
\end{equation}
iii) The reciprocal of ii) is also true. That is, if $g$ satisfies
(\ref{ecuacion integral de g}), then $g$ solves the problem
$\bar{L}g+ \lambda g=0$, $g(a,\lambda)=0$ and
$g^{\prime}(a,\lambda)=1$.
\end{proposition}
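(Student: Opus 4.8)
The plan is to treat the three parts in the order i), iii), ii), because the verification carried out in iii) supplies exactly the mechanism needed for the existence assertion in ii). For part i), I would simply unwind the two measure-derivatives. From $s(x)=\int_{0}^{x}e^{W(y)}dy$ one has $ds=e^{W(x)}\,dx$, so the computation already recorded in (\ref{2}) gives $\frac{df}{ds}(x)=e^{-W(x)}f'(x)$; since the speed measure is $m(dx)=2e^{-W(x)}\,dx$, applying $\frac{d}{dm}$ to $e^{-W}f'$ divides its ordinary derivative by $2e^{-W(x)}$ and produces $\bar{L}f(x)=\frac{e^{W(x)}}{2}\bigl(e^{-W(x)}f'(x)\bigr)'$. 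The only hypothesis used is that $e^{-W}f'$ be differentiable, which is precisely the defining property of $D_{0}$.

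For part iii), I would differentiate the integral equation (\ref{ecuacion integral de g}) twice. Evaluating at $x=a$ makes both integrals vanish, so $g(a,\lambda)=0$. A single differentiation in $x$ yields $g'(x,\lambda)=e^{W(x)}\bigl[-2\lambda\int_{a}^{x}g(z,\lambda)e^{-W(z)}\,dz+e^{-W(a)}\bigr]$, whence $g'(a,\lambda)=1$ and $g'$ already has the shape $e^{W}\cdot(\text{differentiable})$ required for $D_{0}$. Multiplying by $e^{-W(x)}$ and differentiating once more gives $\bigl(e^{-W(x)}g'(x)\bigr)'=-2\lambda e^{-W(x)}g(x)$, and substituting this into the identity from part i) returns $\bar{L}g=-\lambda g$, as claimed.

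For the existence in part ii), I would regard (\ref{ecuacion integral de g}) as a linear Volterra equation and solve it by successive approximations. Because the Brownian path $W$ is continuous, the weights $e^{\pm W}$ are bounded on the compact interval $[a,b]$, so the kernel is bounded; the nested double integral forces an $n$-fold integration at the $n$-th Picard step, giving factorial decay and a uniformly convergent Neumann series whose limit is a continuous (in fact unique) solution $g$. Part iii) then certifies that this $g$ lies in $D_{0}$ and solves $\bar{L}g+\lambda g=0$ with $g(a,\lambda)=0$, $g'(a,\lambda)=1$; conversely, the derivation of (\ref{ecuacion integral de g}) from the differential problem is nothing more than the double integration already packaged in Mandl's formula (\ref{mandl f}) with $h=\bar{L}g=-\lambda g$. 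The point to watch is not the fixed-point argument, which is standard, but ensuring that the non-differentiability of $W$ never intervenes: every estimate uses only continuity of $e^{\pm W}$, and differentiability is demanded solely of the combination $e^{-W}g'$, in keeping with the definition of $D_{0}$.
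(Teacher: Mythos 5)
Your proposal is correct, and parts i) and iii) follow essentially the same route as the paper: i) is the direct unwinding of $\frac{d}{dm}\frac{d}{ds}$, and iii) is a direct differentiation of (\ref{ecuacion integral de g}) (the paper phrases it as differentiating with respect to $s$ and then $m$, which is the same computation you perform with ordinary derivatives plus the formula from part i)). Where you genuinely diverge is the existence assertion in ii): the paper simply invokes the general theory of diffusions (It\^{o}--McKean, Section 4.6) to guarantee a solution of $\bar{L}g+\lambda g=0$ with the given initial data, and then derives the integral equation by two applications of the fundamental theorem of calculus for measure derivatives (Mandl's formula with $h=-\lambda g$). You instead construct the solution directly as the fixed point of the linear Volterra equation (\ref{ecuacion integral de g}) via Picard iteration, using only the boundedness of $e^{\pm W}$ on $[a,b]$, and then let part iii) certify membership in $D_{0}$ and the differential equation. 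Your ordering iii)-before-ii) is what makes this non-circular, and your closing remark (that any solution of the differential problem satisfies the integral equation by Mandl's formula) covers the ``moreover'' clause for solutions not produced by the iteration. The trade-off: the paper's citation is shorter and ties the result to the diffusion literature, while your argument is self-contained, yields uniqueness of the solution for free, and makes explicit that the non-differentiability of $W$ never enters the estimates --- a point the paper leaves implicit.
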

\begin{proof}
i) We simply calculate $\displaystyle\frac{d}{dm}\frac{d}{ds}f(x)$
using the speed measure $m$ and the scale function $s$. Then we have
\begin{eqnarray*}
  \displaystyle\frac{d}{d m}\frac{d}{d s}f(x) &=&
  \frac{d}{d m} \left( \displaystyle\lim_{h \rightarrow 0}
  \frac{f(x+h)- f(x)}{\int_{x}^{x+h} e^{W(y)}dy}  \right) \\
   &=& \frac{d}{d m} \left( e^{-W(x)}f'(x)  \right) \\
   &=& \displaystyle\lim_{h \rightarrow 0}
   \frac{e^{-W(x+h)f'(x+h)}-e^{-W(x)f'(x)}}
   {\int_{x}^{x+h} 2e^{-W(y)}dy} \\
   &=& \displaystyle\frac{e^{W(x)}}{2} \left(
e^{-W(x)}f'(x) \right)'.
\end{eqnarray*}
ii) The existence of a solution comes from general theory of
diffusions, see e.g. \cite{ItoMcKean}, specifically in Section 4.6,
page 128. To prove the second statement, we apply twice a
fundamental theorem of calculus adapted to derivatives with respect
a measure, see e.g. \cite{Revuz, GP}. Indeed, using the definition
of $\bar{L}f(x)=\displaystyle\frac{d}{dm}\frac{d}{ds}f(x)$ in the
equation $\bar{L}g+\lambda g=0$, we calculate first the integral
with respect to $m$ and later with respect to
$s$, from where equation (\ref{ecuacion integral de g}) arises.\\
iii) The proof of this fact is simply to calculate on the equation
the derivative with respect to $s$ and later with respect a $m$.
\end{proof}


In the theory of differential equations, the the so-called Wronskian
is an important tool to detect whether there is dependance among
solutions of a differential equation. As one might expect, it turns
out that in this context with a random coefficients the Wronskian
can be calculated and it is used as well for the same purpose. It is
defined as the following determinant,
\begin{equation*}
w_{f,g}(x):=\left|
\begin{array}{cc}
g(x) & f(x)\\
\frac{dg}{ds}(x) & \frac{d f}{ds}(x)
\end{array}
 \right|
=\frac{f'(x)g(x)}{e^{W(x)}}-\frac{f(x)g'(x)}{e^{W(x)}}.
\end{equation*}
where $f$ and $g$ are two functions, and $e^{W(x)}$ is the density
of the scale function associated to the Brox process.

\begin{proposition}\label{PropWronskian}
Let $f$ and $g$ be two solutions of $\bar{L}\psi+ \lambda \psi=0$,
with $x\in [a,b]$, such that $f(a,\lambda)=0$ and $g(a,\lambda)=0$.
Then $\frac{d w_{f,g}(x)}{dx}=0$, which implies that there exists a
constant $C$ such that $f=Cg$.
\end{proposition}
\begin{proof}
Using Proposition \ref{PropPsiEq}, if $f$ and $g$ are solution of
$\bar{L}\psi+ \lambda \psi=0$, then
\begin{equation}\label{EasyL}
\frac{e^{W(x)}}{2}\left(e^{-W(x)}\psi^{\prime}(x)\right)^{\prime}+\lambda
\psi(x)=0,
\end{equation}
which implies
\begin{equation*}
\left( \frac{\psi^{\prime}(x)}{e^{W(x)}}
\right)^{\prime}=(-2)\frac{\lambda \psi(x)}{e^{W{x}}}.
\end{equation*}
Hence, we can substitute into the derivative of the Wronskian to see
that
\begin{eqnarray*}
  \frac{d w_{f,g}(x)}{dx} &=&
  \left( \frac{f'(x)g(x)}{e^{W(x)}}-\frac{f(x)g'(x)}{e^{W(x)}} \right)' \\
   &=& \left(\frac{f'(x)}{e^{W(x)}} \right)'g(x)-
  \left(\frac{g'(x)}{e^{W(x)}} \right)'f(x)\\
   &=& 0
\end{eqnarray*}
This implies that $w_{f,g}(x)=M$ for some constant $M$, so
\begin{equation*}
f'(x)g(x)-g'(x)f(x)=Me^{W(x)},
\end{equation*}
where $M$ is a constant. Using that $f(a,\lambda)=0$ and
$g(a,\lambda)=0$, we have that $M=0$, then for all $x\in [a,b]$ we
have that
\begin{equation} \label{animal}
w_{f,g}(x)= f'(x)g(x)-g'(x)f(x)=0
\end{equation}

Consider the set $A:=\{x: g(x)\neq 0 \}$ . For all $x \in A$ we have

$$\displaystyle\frac{f'(x)g(x)-g'(x)f(x)}{g^{2}(x)}=0,$$

\noindent this implies

$$\left( \displaystyle\frac{f(x)}{g(x)} \right)'=0.$$

Then we have that there exists a constant $C$ such that $f(x)=Cg(x)$
for all $x \in A$, which by (\ref{animal}) also holds for $x \in
A^{c}$. To see that, let $B:=\{x: g'(x)\neq 0 \}$. Then, using the
equation (\ref{animal}), we have for all $x \in B$

$$f(x)= \displaystyle\frac{f'(x)}{g'(x)}g(x)$$

We now show that $\displaystyle \frac{f'(x)}{g'(x)}$ is a constant.
To do that, we calculate the derivative. Using again the fact that
$f$ and $g$ are solutions of (\ref{EasyL}) and taking into account
(\ref{animal}),
we have
\begin{eqnarray*}
  \left( \displaystyle \frac{f'(x)}{g'(x)} \right)' &=&
  \left( \displaystyle \frac{f'(x)e^{-W(x)}}{g'(x)e^{-W(x)}} \right)'\\
   &=& \frac{g'(x)e^{-W(x)}\cdot (f'(x)e^{-W(x)})'-f'(x)e^{-W(x)}
  \cdot (g'(x)e^{-W(x) })'}{(g'(x)e^{-W(x)})^{2}} \\
   &=& \frac{g'(x)e^{-W(x)}\cdot (- 2\lambda f(x)e^{-W(x)})-f'(x)e^{-W(x)}
  \cdot (-2\lambda g(x)e^{-W(x) })}{(g'(x)e^{-W(x)})^{2}} \\
   &=& 0.
\end{eqnarray*}

Then for all $x \in B$ there exists a constant $K$ such that $f(x)=K
g(x)$.  We have that $C=K$. To do that,it is sufficient to show that
$g$ and $g'$ do not have common zeros. This is true by using the
formulae (\ref{mandl f}) and (\ref{mandl derivada de f}).

Therefore for all $x \in [a,b]$ we have
\begin{equation}\label{EqSWr0}
f(x)=C g(x).
\end{equation}

And the proof is done.
\end{proof}


To facilitate the notation, from now on we use $L$ alone instead of
$\bar{L}$.

\section{Green Operator}

From the previous section we have that the infinitesimal operator
associated with the Brox diffusion with killing on $a$ and $b$ is
given by
$$Lf(x)= \displaystyle\frac{e^{W(x)}}{2}\left(e^{-W(x)}f'(x)  \right)',$$
whose domain are functions $f\in D_{0}$ such that $f(a)=f(b)=0$,
i.e. $f \in D(L).$

We want to construct the so-called Green operator, which is actually
the inverse operator of $L$. First, consider the following identity
known as the Lagrange identity.
\begin{lemma} \label{identidad de lagrange}
Let $f,g$ in $D_{0}$, then
$$2 e^{-W(x)} \left[ g(x) Lf(x)- f(x) Lg(x) \right]=
\left[e^{-W(x)}\left(f'(x)g(x)-f(x)g'(x) \right)  \right]'.$$
\end{lemma}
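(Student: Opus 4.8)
The plan is to reduce both sides of the claimed identity to the same expression, using the divergence form of the operator established in Proposition \ref{PropPsiEq}(i) and then matching terms by the product rule. First I would rewrite the generator as $Lf(x)=\frac{e^{W(x)}}{2}\left(e^{-W(x)}f'(x)\right)'$, which is legitimate precisely because $f,g\in D_{0}$ guarantees that $e^{-W(x)}f'(x)$ and $e^{-W(x)}g'(x)$ are differentiable. Substituting this into the left-hand side, the prefactor $2e^{-W(x)}$ cancels against the factor $e^{W(x)}/2$ coming from each occurrence of $L$, so that
$$2 e^{-W(x)} \left[ g(x) Lf(x)- f(x) Lg(x) \right]= g(x)\left(e^{-W(x)}f'(x)\right)' - f(x)\left(e^{-W(x)}g'(x)\right)'.$$

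Next I would expand the right-hand side of the identity directly. Writing the bracketed quantity as $e^{-W(x)}f'(x)g(x)-e^{-W(x)}g'(x)f(x)$ and differentiating by the product rule produces four terms; the two cross terms $e^{-W(x)}f'(x)g'(x)$ and $-e^{-W(x)}g'(x)f'(x)$ cancel identically, leaving exactly $g(x)\left(e^{-W(x)}f'(x)\right)'-f(x)\left(e^{-W(x)}g'(x)\right)'$. Comparing this with the reduced left-hand side above completes the verification, so the whole argument is just a cancellation plus one application of the Leibniz rule.

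I do not anticipate a genuine obstacle here: the statement is an algebraic consequence of the product rule once the operator is put in divergence form. The only point that deserves care is that every derivative written down must actually exist, and this is exactly what membership in $D_{0}$ supplies, since it makes $e^{-W}f'$ and $e^{-W}g'$ differentiable and hence renders both $\left(e^{-W(x)}f'(x)\right)'$ and the derivative of the Wronskian-type bracket on the right well defined. Notably, no further smoothness of the environment $W$ is required, which is precisely the feature that lets this serve as the correct substitute for the classical Lagrange identity in the present non-smooth, random-coefficient setting.
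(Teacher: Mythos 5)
Your proposal is correct and follows essentially the same route as the paper: the paper sets $h_{1}=e^{-W}f'$, $h_{2}=e^{-W}g'$, expands the derivative of the bracket by the Leibniz rule, and cancels the two cross terms, which is exactly your computation read from right to left. The only cosmetic difference is that you reduce both sides to the common expression $g\left(e^{-W}f'\right)'-f\left(e^{-W}g'\right)'$ rather than transforming one side into the other.
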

\begin{proof}
Let $h_{1}(x)=e^{-W(x)}f^{\prime}(x)$ and
$h_{2}(x)=e^{-W(x)}g^{\prime}(x)$. Then
\begin{eqnarray*}
\left[e^{-W(x)}\left(f'(x)g(x)-f(x)g'(x) \right)  \right]'&=&
\left[h_{1}(x)g(x)-h_{2}(x)f(x)\right]'\\
&=&
h_{1}^{\prime}(x)g(x)+g^{\prime}(x)h_{1}(x)-h_{2}^{\prime}(x)f(x)-f^{\prime}(x)h_{2}(x)\\
&=& h_{1}^{\prime}(x)g(x)-h_{2}^{\prime}(x)f(x)\\
&=& 2 e^{-W(x)} \left[ g(x) Lf(x)- f(x) Lg(x) \right].
\end{eqnarray*}
\end{proof}

It is important to notice in this formula that it is not necessary
to differentiate $e^{W(x)}$.

From previous identity, after integrating we also obtain
\begin{corollary}\label{lagrange2}
Let $\alpha$, $\beta$ be such that $a\leq \alpha < \beta \leq b$.
And $\lambda_{1}$ and $\lambda_{2}$ with $Lf+\lambda_{1}f=0$,
$Lg+\lambda_{2}g=0$. Then
$$ \left[e^{-W(x)}\left(f'(x)g(x)-f(x)g'(x) \right) \right]_{\alpha}^{\beta}=
2(\lambda_{2}-\lambda_{1})\displaystyle\int_{\alpha}^{\beta}e^{-W(x)}f(x)g(x)dx.$$
\end{corollary}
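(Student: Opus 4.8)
The plan is to integrate the Lagrange identity from Lemma \ref{identidad de lagrange} over the interval $[\alpha,\beta]$ and then simplify the right-hand side using the eigenfunction equations. First I would apply Lemma \ref{identidad de lagrange} to the two functions $f$ and $g$, which both lie in $D_{0}$ since they solve the respective equations $Lf+\lambda_{1}f=0$ and $Lg+\lambda_{2}g=0$. This gives the pointwise identity
\begin{equation*}
2 e^{-W(x)} \left[ g(x) Lf(x)- f(x) Lg(x) \right]=
\left[e^{-W(x)}\left(f'(x)g(x)-f(x)g'(x) \right)  \right]'.
\end{equation*}

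Next I would substitute $Lf(x)=-\lambda_{1}f(x)$ and $Lg(x)=-\lambda_{2}g(x)$, obtained directly from the two eigenfunction equations, into the left-hand side. The bracketed term becomes $g(x)(-\lambda_{1}f(x)) - f(x)(-\lambda_{2}g(x)) = (\lambda_{2}-\lambda_{1}) f(x)g(x)$, so the whole left side collapses to $2(\lambda_{2}-\lambda_{1}) e^{-W(x)} f(x)g(x)$. The identity thus reads
\begin{equation*}
2(\lambda_{2}-\lambda_{1}) e^{-W(x)} f(x)g(x) = \left[e^{-W(x)}\left(f'(x)g(x)-f(x)g'(x) \right)  \right]'.
\end{equation*}

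Finally I would integrate both sides with respect to $x$ from $\alpha$ to $\beta$. The right-hand side is an exact derivative, so by the fundamental theorem of calculus it integrates to the evaluation $\left[e^{-W(x)}\left(f'(x)g(x)-f(x)g'(x) \right) \right]_{\alpha}^{\beta}$, while the left-hand side yields $2(\lambda_{2}-\lambda_{1})\int_{\alpha}^{\beta}e^{-W(x)}f(x)g(x)\,dx$. Equating the two and moving the constant factor to the right gives exactly the claimed formula.

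I do not expect a genuine obstacle here, since the result is essentially a one-line consequence of Lemma \ref{identidad de lagrange}. The only point requiring mild care is the legitimacy of integrating the right-hand side as an exact derivative: one should note that $e^{-W(x)}f'(x)$ and $e^{-W(x)}g'(x)$ are differentiable (because $f,g\in D_{0}$), so the bracketed expression $e^{-W(x)}(f'(x)g(x)-f(x)g'(x))$ is continuously differentiable on $[\alpha,\beta]$ and the fundamental theorem of calculus applies without complication. This is precisely the feature emphasized after Lemma \ref{identidad de lagrange}, namely that no differentiation of $e^{W(x)}$ is needed.
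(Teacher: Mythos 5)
Your proof is correct and follows exactly the route the paper intends: the paper states this corollary with only the remark ``From previous identity, after integrating we also obtain,'' and your argument---substituting $Lf=-\lambda_{1}f$ and $Lg=-\lambda_{2}g$ into the Lagrange identity of Lemma \ref{identidad de lagrange} and integrating over $[\alpha,\beta]$---is precisely that omitted computation. Your added remark on why the fundamental theorem of calculus applies (differentiability of $e^{-W(x)}f'(x)$ and $e^{-W(x)}g'(x)$ for $f,g\in D_{0}$) is a welcome refinement of a point the paper leaves implicit.
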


Using methods developed in the companion paper \cite{GPP}, see also
\cite{Pacheco}, we construct the Green operator. Consider $f$ in the
domain of $L$. We propose the following operator.
\begin{definition}\label{kernelO}
Let
\begin{equation}\label{kernel}
\noindent g(x,\xi):= \displaystyle \left\{
\begin{array}{ll}
   -Cu(x)v(\xi) ,  & {a \leq x \leq \xi;} \\
   -Cu(\xi)v(x),  & {\xi \leq x \leq b,} \\
\end{array}
\right.
\end{equation}
where
$$C:= \displaystyle\int_{a}^{b}e^{W(z)}dz,\
u(x):= \displaystyle\frac{1}{C}\int_{a}^{x}e^{W(z)}dz,\ v(x):=
\displaystyle\frac{1}{C}\int_{x}^{b}e^{W(z)}dz.$$ We then define the
Green operator as
\begin{equation*}
Tf(x):=\displaystyle\int_{a}^{b}2e^{-W(z)}g(z,x)f(z)dz,
\end{equation*}
for any $f\in D(L)$.
\end{definition}

Let $\xi$ be fixed. Note that $g$ satisfies
$$g(a,\xi)=0, \; \; g(b,\xi)=0.$$
Let us also see that $Lg(x,\xi)=0$ for $x \neq \xi$, where $\xi$ is
any fixed value, and with the understanding that $g'(x,\xi)$ is the
derivative of $g(x,\xi)$ with respect to the first argument $x$,
with $x \neq \xi$. To do that, we consider the derivatives from
right and left of $\xi$. Then we obtain
\begin{center}
\noindent $g^{\prime}(x,\xi):= \displaystyle \left\{
\begin{array}{ll}
   -e^{W(x)}v(\xi) ,  & {a \leq x \leq \xi;} \\
   e^{W(x)}u(\xi),  & {\xi \leq x \leq b,} \\
\end{array}
\right.$
\end{center}
Then, after multiplying by $e^{-W(x)}$ there is no more dependance
on $x$, therefore the second derivative gives $0$. This implies that
$Lg(x,\xi)=0$ for $x \in [a,\xi)$ and $x \in (\xi,b]$.

Now we use the Lagrange identity with the function $f$ and $g$,
using that $Lg(x,\xi)=0$. Then for $x \in [a,\xi)$ and $x \in
(\xi,b]$ we have
\begin{equation}\label{lagrange con g}
2 e^{-W(x)} \left[ g(x,\xi) Lf(x) \right]=
\left[e^{-W(x)}\left(f'(x)g(x,\xi)-f(x)g'(x,\xi) \right)  \right]'.
\end{equation}

On integrating both sides of (\ref{lagrange con g}) on the intervals
$(a,\xi^{-})$ and $(\xi^{+},b)$, where $\xi^{-}:= \xi-\epsilon$ and
$\xi^{+}:=\xi+\epsilon$.
Then, to calculate $TLf$, we have the following two equalities
\begin{equation}\label{lagrange a,xi}
\displaystyle\int_{a}^{\xi^{-}}2e^{-W(x)}g(x,\xi)Lf(x)dx =
\left[e^{-W(x)} \left(f'(x)g(x,\xi)- f(x)g'(x,\xi) \right)
\right]_{a}^{\xi^{-}},
\end{equation}
\begin{equation}\label{lagrange xi,b}
\displaystyle\int_{\xi^{+}}^{b}2e^{-W(x)}g(x,\xi)Lf(x)dx =
\left[e^{-W(x)} \left(f'(x)g(x,\xi)- f(x)g'(x,\xi) \right)
\right]_{\xi^{+}}^{b}.
\end{equation}

By adding (\ref{lagrange a,xi}) and (\ref{lagrange xi,b}), and using
that $f(a)=f(b)=g(a,\xi)=g(b,\xi)=0$, we have
\begin{equation*}
\displaystyle\int_{a}^{b}2e^{-W(x)}g(x,\xi)Lf(x)dx-\displaystyle\int_{-\epsilon}^{\epsilon}2e^{-W(x)}g(x,\xi)Lf(x)dx
\end{equation*}
\begin{eqnarray*}
&=&e^{-W(\xi^{-})}\left(f^{\prime}(\xi^{-})g(\xi^{-},\xi)-f(\xi^{-})g^{\prime}(\xi^{-},\xi)\right)\\
&-&e^{-W(\xi^{+})}\left(f^{\prime}(\xi^{+})g(\xi^{+},\xi)-f(\xi^{+})g^{\prime}(\xi^{+},\xi)\right).
\end{eqnarray*}
After expanding we end up with four terms. From the continuity of
$W, \ f^{\prime}$ and $g$, the first and third terms cancel each
other when $\epsilon\to 0$.

Since $g^{\prime}$ is not continuous, the second and fourth terms do
not vanish. These terms are
\begin{equation*}
-e^{-W(\xi^{-})}f(\xi^{-})g^{\prime}(\xi^{-},\xi)
+e^{-W(\xi^{+})}f(\xi^{+})g^{\prime}(\xi^{+},\xi).
\end{equation*}
Taking the discountinuity into account, previous display is
\begin{equation*}
e^{-W(\xi^{-})}f(\xi^{-})e^{W(\xi^{-})}v(\xi)
+e^{-W(\xi^{+})}f(\xi^{+})e^{W(\xi^{+})}u(\xi).
\end{equation*}
Then, when $\epsilon\to 0$, it becomes
\begin{equation*}
e^{-W(\xi)}f(\xi)e^{W(\xi)}u(\xi)+e^{-W(\xi)}f(\xi)e^{W(\xi)}v(\xi).
\end{equation*}
Since $u(\xi)+v(\xi)=1$,
\begin{equation*} \label{inversa del operador}
TLf(\xi)= \displaystyle\int_{a}^{b}2e^{-W(x)}g(x,\xi)Lf(x)dx =
f(\xi).
\end{equation*}

The conclusion is given as follows.
\begin{theorem}\label{ThmLT}
Let $T$ given in Definition (\ref{kernelO}). Then  $T$ satisfies
$T(Lf)(x)= f(x)$ for all $f \in D(L)$, and $L(Th)(x)=h(x)$ for all
$h \in L^{2}([a,b])$.
\end{theorem}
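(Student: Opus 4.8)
The plan is to treat the two claimed identities separately, since the first is essentially already in hand from the computation carried out just before the statement, while the second requires a direct differentiation of the integral defining $T$.

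For the identity $T(Lf)(x)=f(x)$ with $f\in D(L)$, I would simply organize the preceding computation into a clean proof. The key ingredients are: the Lagrange identity (Lemma \ref{identidad de lagrange}) applied to $f$ and $g(\cdot,\xi)$; the fact that $Lg(x,\xi)=0$ for $x\neq\xi$, so that the left-hand side of the Lagrange identity reduces to $2e^{-W(x)}g(x,\xi)Lf(x)$; integration over $(a,\xi^{-})$ and $(\xi^{+},b)$ followed by letting $\epsilon\to0$; and the vanishing of $f$ and $g$ at the endpoints $a,b$. The only non-cancelling contribution comes from the jump of $g'(\cdot,\xi)$ at $x=\xi$, and collecting it with the relation $u(\xi)+v(\xi)=1$ yields exactly $f(\xi)$.

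For the identity $L(Th)(x)=h(x)$ with $h\in L^{2}([a,b])$, I would compute directly. First split the kernel at $z=x$ to write
\[
Th(x)=-2C\,v(x)\!\int_a^x e^{-W(z)}u(z)h(z)\,dz-2C\,u(x)\!\int_x^b e^{-W(z)}v(z)h(z)\,dz,
\]
and abbreviate the two integrals by $P(x)$ and $Q(x)$. Using $u'(x)=-v'(x)=e^{W(x)}/C$ and $u(x)+v(x)=1$ from Definition \ref{kernelO}, one differentiates and observes that the two terms carrying $h$ cancel, giving $e^{-W(x)}(Th)'(x)=2\big(P(x)-Q(x)\big)$. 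Differentiating once more, the weights $u,v$ disappear and the boundary derivatives $P'=e^{-W}uh$, $Q'=-e^{-W}vh$ combine through $u+v=1$ to produce $\big(e^{-W(x)}(Th)'(x)\big)'=2e^{-W(x)}h(x)$; multiplying by $e^{W(x)}/2$, as in part i) of Proposition \ref{PropPsiEq}, gives $L(Th)(x)=h(x)$.

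The main obstacle is regularity rather than algebra. Since $h$ is only in $L^{2}([a,b])$, the function $Th$ need not be classically twice differentiable, so I must justify that $L(Th)$ is well defined and that the Leibniz differentiation above is legitimate. This follows because $e^{-W}$, $u$ and $v$ are continuous, hence bounded, on the compact interval $[a,b]$; therefore $e^{-W}uh$ and $e^{-W}vh$ lie in $L^{1}([a,b])$, so $P$ and $Q$ are absolutely continuous with the stated a.e. derivatives, and $e^{-W}(Th)'=2(P-Q)$ is again absolutely continuous, which is precisely the sense in which $Th\in D_{0}$ and $L(Th)$ can be evaluated. As a byproduct one checks $Th(a)=Th(b)=0$, using $u(a)=v(b)=0$ and $P(a)=Q(b)=0$, so that $T$ in fact maps $L^{2}([a,b])$ into $D(L)$, which makes the two identities genuine inverses of each other.
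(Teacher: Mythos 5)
Your proposal is correct and follows essentially the same route as the paper: the first identity is obtained by organizing the Lagrange-identity computation carried out just before the theorem, and the second by splitting the kernel at $z=x$ and applying $L$ directly to the displayed formula for $Th$. The only difference is that you actually carry out the differentiation (using $u'=-v'=e^{W}/C$ and $u+v=1$) and justify the regularity via absolute continuity of $P$ and $Q$, details the paper leaves to the reader with the phrase ``one can apply $L$ to verify that $LTh=h$''; your added observation that $Th(a)=Th(b)=0$ is a worthwhile supplement.
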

\begin{proof}
We have already shown that $T(Lf)(x)=f(x)$. Using that
\begin{equation*}
Th(x)= -2Cv(x)\int_{a}^{x}e^{-W(z)}u(z)h(z)dz-2C
u(x)\int_{x}^{b}e^{-W(z)}v(z)h(z)dz,
\end{equation*}
One can apply $L$ to verify that $LTh=h$.
\end{proof}

\section{Toward density}

In the theory of Markov processes, it is well known that spectral
information of the generator helps to study the transition
probability functions of the stochastic process. In turn, one can
use the eigenvalues and eigenfunctions to give expressions for the
probability density. In fact, we can identify the eigenvalues of the
generator of the killed Brox process with the eigenvalues of the
Green operator $T$ of Theorem \ref{ThmLT}, precisely because $T$ is
the inverse of $L$.
\begin{corollary}
Operator $L$ has almost surely a countable set of eigenvalues.
\end{corollary}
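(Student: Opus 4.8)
The plan is to exploit that $T$ is the inverse of $L$ (Theorem \ref{ThmLT}) and to show that $T$ is a compact, self-adjoint operator on the Hilbert space $H:=L^{2}([a,b],\,2e^{-W(x)}dx)$, i.e. $L^{2}$ against the speed measure, so that the classical spectral theorem for compact self-adjoint operators delivers a countable family of eigenvalues. The passage back to $L$ is then immediate: if $T\phi=\mu\phi$ with $\mu\neq 0$, applying $L$ and using $LT=I$ gives $\phi=\mu L\phi$, hence $L\phi=\mu^{-1}\phi$; thus each nonzero eigenvalue of $T$ produces exactly one eigenvalue of $L$, and conversely. Moreover $T$ is injective, since $Tf=0$ forces $f=LTf=0$, so $0$ is not an eigenvalue of $T$ and nothing is lost in this reciprocal correspondence.

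First I would record the symmetry of the kernel. Writing the kernel of Definition \ref{kernelO} as $g(z,x)=-C\,u(z\wedge x)\,v(z\vee x)$ one sees directly that $g(z,x)=g(x,z)$. Consequently, for $f,h\in H$, Fubini's theorem yields
$$\langle Tf,h\rangle=\int_{a}^{b}\!\!\int_{a}^{b}2e^{-W(z)}\,2e^{-W(x)}\,g(z,x)f(z)h(x)\,dz\,dx=\langle f,Th\rangle,$$
so that $T$ is self-adjoint on $H$. It is precisely the weighting by the speed measure $2e^{-W}dx$ that turns the symmetry of $g$ into self-adjointness of $T$.

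Next I would establish compactness through the Hilbert--Schmidt criterion, and this is where the environment enters and where the almost-sure qualifier originates. For almost every realization the Brownian path $W$ is continuous on the compact interval $[a,b]$, hence bounded, so $e^{W}$ and $e^{-W}$ are bounded above and below by positive constants there. It follows that $u$, $v$, and therefore $g$, are continuous and bounded on $[a,b]^{2}$, and that the measure $2e^{-W(x)}dx$ is finite. Hence
$$\int_{a}^{b}\!\!\int_{a}^{b}|g(z,x)|^{2}\,2e^{-W(z)}\,2e^{-W(x)}\,dz\,dx<\infty,$$
which says exactly that $T$ is Hilbert--Schmidt on $H$; in particular $T$ is compact.

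Finally I would invoke the spectral theorem for compact self-adjoint operators: the spectrum of $T$ consists of a countable set of real eigenvalues, each of finite multiplicity, accumulating only at $0$. Transporting this through the reciprocal correspondence above gives a countable set of eigenvalues for $L$, which proves the corollary. The main obstacle is not the spectral theorem itself but the correct choice of Hilbert space: one must work in $L^{2}$ against the speed measure so that the kernel symmetry $g(z,x)=g(x,z)$ genuinely renders $T$ self-adjoint, and one must keep track that the boundedness of the random but almost surely continuous weight $e^{\pm W}$ on $[a,b]$ is exactly what makes the Hilbert--Schmidt integral finite and thereby confines the argument to a set of full probability.
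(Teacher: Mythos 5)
Your proposal is correct and follows essentially the same route as the paper: the paper's proof likewise asserts that $T$ is almost surely compact and then converts eigenpairs of $T$ into eigenpairs of $L$ via the inverse relation of Theorem \ref{ThmLT}. You simply supply the details the paper leaves implicit (symmetry of the kernel, self-adjointness in $L^{2}([a,b],2e^{-W(x)}dx)$, the Hilbert--Schmidt bound from the almost-sure boundedness of $W$ on $[a,b]$, and injectivity of $T$), all of which are sound.
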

\begin{proof}
This comes from the fact that for almost every trajectory of $W$,
the operator $T$ is a compact operator, thus it has a countable set
of eigenvalues. Then, if $(\lambda,f)$ is an eigenpair of $T$, then
$Tf+\lambda f=0$. Thus, $f=LTf=-\lambda Lf$, i.e. $(1/\lambda,f)$ is
an eigenpair of $L$.
\end{proof}

Now we know that the generator of $X$ has a discrete spectrum given
by the eigenvalues $\lambda_{n}$, and each one has associated an
eigenfunction $\phi_{n}$. Thus, at a theoretical point of view, it
is just a matter to join pieces to have the spectral decomposition
of the probability transition function.

Notice that apriori we do not know if the transition probabilities
are absolutely continuous with respect to the Lebesgue measure,
however it is indeed the case.
\begin{theorem}
If we leave fixed an environment $W$, then for all $x,y \in (a,b)$
we have
\begin{equation}\label{EqTDF}
p(t,x,y)=2e^{-W(y)}\displaystyle\sum_{n=1}^{\infty}
e^{-\lambda_{n}t}\phi_{n}(x)\phi_{n}(y),
\end{equation}
where $p(t,x,y)$ is the density function of $X_{t}$ given that
$X_{0}=x$, and $\left\{ \lambda_{n},\ \phi_{n}
\right\}_{n=1}^{\infty}$ are the eigenvalues and eigenfunctions of
$L$.
\end{theorem}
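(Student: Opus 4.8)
The plan is to recognize $L$ as a self-adjoint operator on the weighted space $\mathcal{H}:=L^{2}([a,b],m)$, where $m(dx)=2e^{-W(x)}\,dx$ is the speed measure, equipped with the inner product $\langle f,h\rangle_{m}=\int_{a}^{b}f(x)h(x)\,2e^{-W(x)}\,dx$, and then to diagonalize it through its compact inverse $T$. First I would observe that the Green kernel of Definition \ref{kernelO} is symmetric, $g(x,\xi)=g(\xi,x)$, since both branches reduce to $-C\,u(x\wedge\xi)\,v(x\vee\xi)$; a Fubini computation then shows that $T$ is self-adjoint on $\mathcal{H}$. Combined with the compactness of $T$ (already used to produce a countable spectrum), the spectral theorem for compact self-adjoint operators furnishes an orthonormal basis $\{\phi_{n}\}_{n\ge 1}$ of $\mathcal{H}$ made of eigenfunctions, normalized so that $\langle\phi_{n},\phi_{k}\rangle_{m}=\delta_{nk}$. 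By the relation between the spectra of $T$ and $L$ established earlier, $L\phi_{n}=-\lambda_{n}\phi_{n}$ with $\lambda_{n}>0$ and $\lambda_{n}\to\infty$.

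Next I would pass from $L$ to the transition semigroup. The killed process $\bar{X}$ is $m$-symmetric, so its semigroup $P_{t}f(x)=\mathbb{E}_{x}[f(\bar{X}_{t})]$ is strongly continuous and self-adjoint on $\mathcal{H}$ with generator $L$; hence the $\phi_{n}$, which diagonalize $L$, also diagonalize the semigroup, giving $P_{t}\phi_{n}=e^{-\lambda_{n}t}\phi_{n}$. Expanding an arbitrary $f\in\mathcal{H}$ in this basis produces
\[
\begin{aligned}
P_{t}f(x) &=\sum_{n\ge 1}e^{-\lambda_{n}t}\langle f,\phi_{n}\rangle_{m}\,\phi_{n}(x)\\
&=\int_{a}^{b}\Big(\sum_{n\ge 1}e^{-\lambda_{n}t}\phi_{n}(x)\phi_{n}(y)\Big)f(y)\,2e^{-W(y)}\,dy .
\end{aligned}
\]
The bracketed sum is therefore the kernel of $P_{t}$ with respect to the speed measure $m$; multiplying by the Radon--Nikodym factor $dm/dy=2e^{-W(y)}$ to express the density against Lebesgue measure yields precisely \eqref{EqTDF}.

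The main obstacle is the rigorous identification of the probabilistic density $p(t,x,y)$ with the analytic kernel delivered by the spectral expansion, together with the regularity of the series. Parseval gives convergence of the expansion in $\mathcal{H}$, but to assert that $\sum_{n}e^{-\lambda_{n}t}\phi_{n}(x)\phi_{n}(y)$ is a genuine, jointly continuous density one needs a Mercer-type argument: for each $t>0$ the growth $\lambda_{n}\to\infty$ (which the oscillation analysis of the later sections quantifies) makes the series converge absolutely and uniformly on compact subsets of $(a,b)\times(a,b)$, while the continuity of the $\phi_{n}$ follows from the regularity of $D(L)$ proved earlier. One then uses the semigroup property $P_{t}=P_{t/2}P_{t/2}$ and the symmetry of the kernel to confirm that this continuous function is indeed the Radon--Nikodym derivative of $P_{t}(x,\cdot)$, which characterizes $p(t,x,y)$ uniquely.
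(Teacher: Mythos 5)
Your proposal is correct and follows essentially the same route as the paper: expand the $m$-symmetric killed semigroup in the orthonormal eigenbasis $\{\phi_{n}\}$ of $L$ on $L^{2}\left([a,b],2e^{-W(x)}dx\right)$, read off the kernel of $P_{t}$ with respect to the speed measure, and convert to a Lebesgue density via the factor $2e^{-W(y)}$. The only divergences are in how individual steps are justified --- you derive the basis from the symmetry and compactness of the Green operator $T$ and flag a Mercer-type argument for pointwise convergence, whereas the paper cites Sturm--Liouville theory (Zettl) for the basis property and invokes dominated convergence to interchange sum and integral --- so if anything your handling of the convergence of the series is the more careful of the two.
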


Some properties known in the classic case, where the parameters of
the operator $L$ are differentiable functions, are also known in the
case the same parameters are not necessarily differentiable.

\begin{proof}
The set $\{\phi_{n}\}_{n=1}^{\infty}$ forms a basis for the space
$L^{2}\left( [a,b],2e^{-W(x)}  \right)$, where the inner product is
given by $\langle f,g \rangle:= \displaystyle
\int_{a}^{b}f(x)g(x)2e^{-W(x)}dx,$ see e.g. Theorem 4.6.2 point (5)
in \cite{Zettl}.

The operator $L$ is self-adjoint and non-positive on $D(L)$.
Therefore if we leave fixed an environment $W$, the semigroup
$P_{t}f$ can be written as
$P_{t}(f(x))=\sum_{n=1}^{\infty}e^{-\lambda_{n}t}\langle f,
\phi_{n}\rangle \phi_{n}(x),$ see \cite{linetsky}.

On the other hand, for $W$ fixed, we have that the semigroup can be
written as $$P_{t}(f(x)):=E_{x}\left(f\left(X_{t} \right) \right),$$
where $X_{t}$ is the Brox Process with killing on $a$ and $b$.

Now, since $P_{t}(f(x)):=
E(f(X_{t})|X_{0}=x)=\displaystyle\int_{-\infty}^{\infty}f(y)p(t,x,dy)$,
using dominated convergence theorem we arrive at
\begin{eqnarray*} \displaystyle\int_{-\infty}^{\infty}f(y)p(t,x,dy)
 &=& \sum_{n=1}^{\infty}e^{-\lambda_{n}t}\langle f,\phi_{n} \rangle \phi_{n}(x) \\
 &=& \sum_{n=1}^{\infty}e^{-nt}
 \left(\displaystyle\int_{-\infty}^{\infty}f(y)\phi_{n}(y)   2e^{-W(y)}dy \right)
\phi_{n}(x) \\   &=& \displaystyle\int_{-\infty}^{\infty} f(y)
\left( \sum_{n=1}^{\infty}   e^{-\lambda_{n}t}   \phi_{n}(y)
\phi_{n}(x) 2e^{-W(y)} \right)dy .
\end{eqnarray*}
This proves the absolutely continuity and formula (\ref{EqTDF}).
\end{proof}

\section{Spectral analysis of the generator}

In previous section we have shown, at least at a theoretical level,
how one can give an spectral decomposition for the densities of $X$.
Let us go further to try to find or characterize the components of
such representation, that is to say: the eigenvalues and the
eigenfunctions. We deal first with the eigenfunctions and after with
the eigenvalues. We will keep noticing how the Green operator $T$ of
Theorem \ref{ThmLT} will be useful for our analysis.

\subsection{Eigenfunctions}


Let $\phi$ be an eigenfunction and $\lambda$ an eigenvalue of the
generator $L$, then it holds
$$L\phi+\lambda \phi=0\text{ with }\phi(a)=\phi(b)=0.$$

The Green operator gives the identity $TL\phi=\phi = -\lambda
T\phi$, that is
$$\phi(x)=\displaystyle -2 \lambda\int_{a}^{b} e^{-W(z)}\phi(z)g(z,x)dz.$$

From the definition of $g$ previous display becomes
$$\phi(x)= 2 C \lambda v(x) \displaystyle\int_{a}^{x}u(z)e^{-W(z)}\phi(z)dz
+ 2 C \lambda u(x) \displaystyle\int_{x}^{b}v(z)e^{-W(z)}\phi(z)dz.
$$ After taking the derivative, a cancellation occurs that yields
$$\phi'(x)= 2 \lambda e^{W(x)}
\left[\displaystyle\int_{x}^{b}v(z)e^{-W(z)}\phi(z)dz -
 \displaystyle\int_{a}^{x}u(z)e^{-W(z)}\phi(z)dz \right].$$

Consider the following function, a trick borrow from \cite{Karatzas}
page 269, which is previous display writing $t$ in lieu of $x$, and
$x$ in lieu of $W(x)$:
\begin{equation*} \label{funcion h}
h(t,x):=2 \lambda e^{x}
\left[\displaystyle\int_{t}^{b}v(z)e^{-W(z)}\phi(z)dz -
 \displaystyle\int_{a}^{t}u(z)e^{-W(z)}\phi(z)dz \right].
\end{equation*}

Applying the It$\hat{\mbox{o}}$'s formula to function $h$ one has
\begin{eqnarray*}
h(t, W(t))-h(a, W(a))&=& 2\lambda \int_{a}^{t}e^{W(s)}\left(
-v(s)e^{-W(s)}\phi(s)-u(s)e^{-W(s)}\phi(s)\right)ds\\
&+&\frac{1}{2}\int_{a}^{t}h(s,W(s))ds+\int_{a}^{t}h(s,W(s))dW(s).
\end{eqnarray*}

Since $\phi^{\prime}(t)=h(t,W(t))$, using $u(s)+v(s)=1$, we have
that $\phi$ satisfies the following stochastic differential
equation.
\begin{proposition}
Let $\phi$ be an eigenfunction of $L$ associated to the eigenvalue
$\lambda$. Then, $\phi$ is solution of
\begin{equation*}
d\phi'(t)= \left[-2 \lambda \phi(t) +\frac{1}{2} \phi'(t) \right]dt
+ \phi'(t)dW(t),
\end{equation*}
with conditions $\phi(a)= 0$ and $\phi(b)=0$.
\end{proposition}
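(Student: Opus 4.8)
The plan is to start from the fixed-point characterization of the eigenfunction supplied by the Green operator and convert it into a stochastic differential equation via It\^{o}'s formula. Since $(\lambda,\phi)$ is an eigenpair of $L$ with $\phi\in D(L)$, the identity $TL\phi=\phi$ from Theorem \ref{ThmLT} gives $\phi=-\lambda T\phi$, which upon inserting the explicit kernel $g(z,x)$ from Definition \ref{kernelO} yields the representation of $\phi(x)$ as a sum of two integrals weighted by $u$ and $v$. Differentiating this representation in $x$ is the first concrete step: the terms coming from differentiating the variable limits of integration cancel, because the kernel is continuous across the diagonal $z=x$, and what survives is the compact expression for $\phi'(x)$ displayed just above the proposition.

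The second step is to isolate the environment's role by introducing the two-variable function $h(t,x)$ obtained from the formula for $\phi'$ by writing $t$ for the spatial variable in the integration limits and treating the outer exponent as a free variable $x$, so that $e^{W(\cdot)}$ becomes $e^{x}$. By construction $\phi'(t)=h(t,W(t))$, so it suffices to compute the It\^{o} differential of $h(t,W(t))$. The crucial structural observation is that $h$ depends on its second argument only through the factor $e^{x}$, whence $\partial_x h=\partial_{xx}h=h$; after substituting $x=W(t)$ both of these equal $\phi'(t)$. The partial derivative in $t$ is handled by the fundamental theorem of calculus applied to the two integrals with variable endpoint $t$, giving $\partial_t h=2\lambda e^{x}\bigl(-v(t)e^{-W(t)}\phi(t)-u(t)e^{-W(t)}\phi(t)\bigr)$, which at $x=W(t)$ collapses to $-2\lambda\phi(t)\bigl(u(t)+v(t)\bigr)=-2\lambda\phi(t)$ by the normalization $u+v=1$.

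The third step is the application of It\^{o}'s formula to $h(t,W(t))$, where $W$ plays the role of the driving Brownian motion and $d[W]_t=dt$. This produces
\begin{equation*}
d\phi'(t)=\Bigl(\partial_t h+\tfrac{1}{2}\partial_{xx}h\Bigr)\,dt+\partial_x h\,dW(t),
\end{equation*}
and substituting the three partial derivatives computed above gives exactly the drift $-2\lambda\phi(t)+\tfrac12\phi'(t)$ and diffusion coefficient $\phi'(t)$ claimed. The boundary conditions $\phi(a)=\phi(b)=0$ are inherited directly from $\phi\in D(L)$.

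I expect the only genuinely delicate point to be the justification of It\^{o}'s formula, namely checking that $h$ is of class $C^{1,2}$ so that the formula applies: smoothness in the second argument is immediate since that dependence is through $e^{x}$, while continuous differentiability in $t$ follows from the fundamental theorem of calculus once one notes that the integrand $u(z)e^{-W(z)}\phi(z)$, and its $v$-counterpart, is continuous, using that $W$ is continuous and $\phi\in D_0\subseteq C^1$. Conceptually the key is the It\^{o} correction term $\tfrac12\partial_{xx}h=\tfrac12\phi'(t)$, since it is precisely the quadratic-variation contribution of the Brownian environment that generates the $\tfrac12\phi'(t)$ drift distinguishing this equation from its deterministic analogue.
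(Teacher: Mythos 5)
Your proposal is correct and follows essentially the same route as the paper: the Green-operator identity $\phi=-\lambda T\phi$, differentiation to get the closed form of $\phi'$, the auxiliary function $h(t,x)$ with the environment isolated in the factor $e^{x}$, and It\^{o}'s formula producing the $\tfrac{1}{2}\phi'(t)$ correction. The only difference is that you spell out the partial derivatives and the $C^{1,2}$ justification explicitly, which the paper leaves implicit.
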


\subsection{Eigenvalues}

In this section we give a method to deal with the eigenvalues. To do
that, with the aid of the Sturm-Liouville theory, we develop few
results suited to work with our operator $L$.

\begin{theorem}\label{teorema de oscilacion}
Consider functions $f\in D_{0}$. Define the following two operators
for $a<x<b$,
\begin{equation*}
L_{1}f(x):=\left(e^{-W(x)}f'(x) \right)'+ 2\lambda_{1}e^{-W(x)}f(x),
\;
\end{equation*}
\begin{equation*}
L_{2}f(x):=\left(e^{-W(x)}f'(x) \right)'+ 2\lambda_{2}
e^{-W(x)}(x)f(x), \;
\end{equation*}
where $\lambda_{2}> \lambda_{1}$. Let $\phi_{1}$ and $\phi_{2}$ such
that $L_{1}\phi_{1}=L_{2}\phi_{2}=0$. Then, between two zeros of
$\phi_{1}$ there is a zero of $\phi_{2}$. Moreover, if
$\phi_{1}(a)=\phi_{2}(a)=0$, then $\phi_{2}$ has at least as many
zeros as $\phi_{1}$ on $[a,b]$.
\end{theorem}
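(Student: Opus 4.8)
The plan is to imitate the classical Sturm comparison argument, with the ordinary Wronskian replaced by the random Wronskian $w_{\phi_1,\phi_2}$ already introduced in the excerpt, so that the non-differentiability of $e^{W}$ never causes trouble. Concretely, I would work with
\begin{equation*}
w(x):=e^{-W(x)}\left[\phi_1'(x)\phi_2(x)-\phi_1(x)\phi_2'(x)\right],
\end{equation*}
which is exactly the quantity appearing in Corollary \ref{lagrange2}. The first step is to record its derivative. Since $L_1\phi_1=0$ and $L_2\phi_2=0$ mean $\left(e^{-W}\phi_1'\right)'=-2\lambda_1 e^{-W}\phi_1$ and $\left(e^{-W}\phi_2'\right)'=-2\lambda_2 e^{-W}\phi_2$, differentiating $w$ and cancelling the $e^{-W}\phi_1'\phi_2'$ terms gives $w'(x)=2(\lambda_2-\lambda_1)e^{-W(x)}\phi_1(x)\phi_2(x)$. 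This is precisely the integrand identity behind Corollary \ref{lagrange2}, and the crucial point — the one flagged in the remark following Lemma \ref{identidad de lagrange} — is that forming $w'$ requires only the derivatives $\left(e^{-W}\phi_i'\right)'$, which exist because $\phi_i\in D_0$; at no stage is $e^{W}$ itself differentiated.

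The second step is the comparison proper, by contradiction. Let $x_1<x_2$ be two consecutive zeros of $\phi_1$ and suppose $\phi_2$ has no zero on $(x_1,x_2)$. Normalizing signs, I may assume $\phi_1>0$ and $\phi_2>0$ on $(x_1,x_2)$. Integrating the identity above over $[x_1,x_2]$ yields $w(x_2)-w(x_1)=2(\lambda_2-\lambda_1)\int_{x_1}^{x_2}e^{-W(x)}\phi_1(x)\phi_2(x)\,dx$, whose right-hand side is strictly positive because $\lambda_2>\lambda_1$ and the integrand is positive on the open interval. On the other hand, because $\phi_1(x_1)=\phi_1(x_2)=0$, the boundary terms collapse to $w(x_1)=e^{-W(x_1)}\phi_1'(x_1)\phi_2(x_1)$ and $w(x_2)=e^{-W(x_2)}\phi_1'(x_2)\phi_2(x_2)$; the sign of $\phi_1$ on $(x_1,x_2)$ forces $\phi_1'(x_1)\ge 0$ and $\phi_1'(x_2)\le 0$, so $w(x_1)\ge 0\ge w(x_2)$, i.e. $w(x_2)-w(x_1)\le 0$. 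This contradicts the strict positivity and establishes the first assertion. To make the strict inequalities on $\phi_1'$ rigorous I would invoke that an eigenfunction and its derivative have no common zero, which is exactly the fact used at the end of the proof of Proposition \ref{PropWronskian} via formulae (\ref{mandl f}) and (\ref{mandl derivada de f}); this guarantees $\phi_1'(x_i)\neq 0$ and rules out the degenerate case.

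For the \emph{moreover} clause I would count zeros. Let $a=z_0<z_1<\dots<z_m$ be the zeros of $\phi_1$ in $[a,b]$. The comparison result just proved places a zero of $\phi_2$ strictly inside each interval $(z_{i-1},z_i)$; the only subtlety is the first interval $(a,z_1)$, handled by the same integration of $w'$ from $a$ to $z_1$, where now $w(a)=0$ automatically since $\phi_1(a)=\phi_2(a)=0$ kills both boundary products. Together with the zero that $\phi_2$ already has at $a$, this produces at least $m+1$ zeros of $\phi_2$ in $[a,b]$, namely at least as many as $\phi_1$. The main obstacle I anticipate is purely the sign/regularity bookkeeping at the endpoints: justifying $\phi_1'(x_i)\neq 0$ and that the boundary contributions vanish or have the claimed signs in the non-smooth setting. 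All of this is controlled by the $C^1$ structure $f'(x)=e^{W(x)}g(x)$ with $g\in C^1$ from Theorem 1 and by the common-zero fact from Proposition \ref{PropWronskian}, so no new analytic input beyond the Lagrange identity is needed.
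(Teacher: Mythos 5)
Your proposal is correct and follows essentially the same route as the paper: both rest on the Lagrange identity of Corollary \ref{lagrange2} applied to the modified Wronskian $e^{-W}(\phi_1'\phi_2-\phi_1\phi_2')$ between consecutive zeros of $\phi_1$, deriving a sign contradiction, and then counting zeros interval by interval for the \emph{moreover} clause. The only (harmless) difference is that you invoke the no-common-zero fact to get strict signs for $\phi_1'$ at the endpoints, whereas the paper's non-strict inequalities already suffice since the right-hand side of the integrated identity is strictly positive.
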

\begin{proof}
Suppose that $x_{1}$ and $x_{2}$ are two successive zeros of
$\phi_{1}$, and that $\phi_{2}(x)\neq 0$ for any $x\in
(x_{1},x_{2})$. Without loss of generality we assume that
$\phi_{1}(x)>0$ and $\phi_{2}(x)>0$ for any $x\in (x_{1},x_{2})$.

The Lagrange's identity in Lemma \ref{lagrange2} gives
$$\left[e^{-W(x)}\left(\phi_{1}'(x)\phi_{2}(x)-
\phi_{1}(x)\phi_{2}'(x)  \right)  \right]_{x_{1}}^{x_{2}}=
2(\lambda_{2}-\lambda_{1})\displaystyle\int_{x_{1}}^{x_{2}}
e^{-W(x)}\phi_{1}(x)\phi_{2}(x)dx.$$ Note that the right hand side
is strictly positive. However, the left hand side reduces to
$$e^{-W(x_{2})}\phi_{1}'(x_{2})\phi_{2}(x_{2})-
e^{-W(x_{1})}\phi_{1}'(x_{1})\phi_{2}(x_{1}).$$ Using the
assumptions of $\phi$ on $x_{1}$ and $x_{2}$, we observe that
$\phi_{2}(x_{2})\geq 0$, $\phi_{1}'(x_{2}) \leq0$,
$\phi_{2}(x_{1})\geq 0$ and $\phi_{1}'(x_{1}) \geq 0$, then the
above expression is less or equal to $0$, giving a contradiction.
Therefore $\phi_{2}$ has a zero between $x_{1}$ and $x_{2}$.

In particular, if $\phi_{1}(a)=\phi_{2}(a)=0$ and
$\phi_{1}(x_{1})=0$ with $a<x_{1}<b$, then there exists $z$, with
$a<z<x_{1}$ such that $\phi_{2}(z)=0$. Thus $\phi_{2}$ has at least
as many zeros as $\phi_{1}$ on $[a,b]$.
\end{proof}

\begin{corollary} \label{CoroZerosEigen}
If $\phi_{n}$ is an eigenfunction of $L$ associated with
$\lambda_{n}$, with $n=1,2...$, then $\phi_{n}$ has exactly $n-1$
zeros in the interval $(a,b)$.
\end{corollary}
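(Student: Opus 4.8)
The plan is to obtain the \emph{exact} count of interior zeros through a Pr\"ufer-type phase analysis, which is the classical device for the Sturmian count and is precisely what the appendix develops for our random operator. First I would fix the normalized solution $\phi(\cdot,\lambda)$ of $L\phi+\lambda\phi=0$ determined by $\phi(a,\lambda)=0$ and $\phi'(a,\lambda)=1$; its existence is guaranteed by Proposition \ref{PropPsiEq}(ii), and its uniqueness by Proposition \ref{PropWronskian}, since any two solutions vanishing at $a$ are proportional. As every eigenfunction vanishes at $a$, it is necessarily a multiple of $\phi(\cdot,\lambda)$; hence each eigenvalue is simple, the eigenvalues can be listed as $\lambda_{1}<\lambda_{2}<\cdots$, and $\lambda$ is an eigenvalue exactly when $\phi(b,\lambda)=0$. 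Counting the interior zeros of $\phi_{n}$ thus amounts to counting the zeros of $\phi(\cdot,\lambda_{n})$ in $(a,b)$. I note that the comparison statement of Theorem \ref{teorema de oscilacion} already yields the monotonicity that a larger eigenvalue produces at least as many zeros, which is consistent with what follows but is not by itself sharp enough for the exact count.

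Next I would introduce the phase. Writing $L\phi+\lambda\phi=0$ as $(e^{-W(x)}\phi'(x))'+2\lambda e^{-W(x)}\phi(x)=0$ and working with the quasi-derivative $\phi^{[1]}:=e^{-W(x)}\phi'(x)$ --- which is differentiable on $D_{0}$ even though $W$ is not, and which is exactly the quantity isolated in the remark after Lemma \ref{identidad de lagrange} --- I set $\phi=\rho\sin\theta$ and $\phi^{[1]}=\rho\cos\theta$. A short computation reduces the second-order equation to the first-order phase equation
\begin{equation*}
\theta'(x)=e^{W(x)}\cos^{2}\theta(x)+2\lambda e^{-W(x)}\sin^{2}\theta(x),\qquad \theta(a,\lambda)=0.
\end{equation*}
Two facts follow by the usual Pr\"ufer reasoning: at a zero of $\phi$ one has $\sin\theta=0$ and there $\theta'=e^{W}>0$, so $\theta$ crosses each multiple of $\pi$ strictly upward and can never return below it; and, differentiating the phase equation in $\lambda$, the derivative $\partial_{\lambda}\theta$ solves a linear equation with zero initial value and nonnegative forcing $2e^{-W}\sin^{2}\theta$, whence $\theta(x,\lambda)$ is strictly increasing in $\lambda$ for every $x>a$.

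With these in hand the count is mechanical. Because $\rho$ never vanishes (otherwise $\phi$ and $\phi^{[1]}$ would vanish together and $\phi\equiv0$), the zeros of $\phi(\cdot,\lambda)$ in $(a,b]$ are exactly the points where $\theta$ reaches a positive multiple of $\pi$; so $\phi(b,\lambda)=0$ if and only if $\theta(b,\lambda)=k\pi$ for some integer $k\ge1$. Since $\theta(b,\cdot)$ is continuous and strictly increasing, the eigenvalues are the successive values of $\lambda$ at which $\theta(b,\lambda)$ passes through $\pi,2\pi,\ldots$, and identifying $\lambda_{n}$ with the $n$-th crossing gives $\theta(b,\lambda_{n})=n\pi$. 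As $\theta$ runs from $0$ at $a$ to $n\pi$ at $b$, it crosses $\pi,2\pi,\ldots,(n-1)\pi$ inside $(a,b)$ and attains $n\pi$ only at the endpoint, so $\phi_{n}$ has exactly $n-1$ zeros in $(a,b)$.

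The step I expect to be delicate --- and the reason the Pr\"ufer machinery is deferred to the appendix --- is the base case of the indexing: I must confirm that the smallest eigenvalue corresponds to the first crossing of $\pi$ rather than a higher multiple, equivalently that $\phi_{1}$ has no interior zero. Here I would use the limit $\lambda\downarrow0$, in which $\phi(\cdot,\lambda)$ degenerates to $e^{-W(a)}\int_{a}^{x}e^{W(z)}dz$, strictly positive on $(a,b]$, forcing $\theta(b,0^{+})\in(0,\pi)$; combined with strict monotonicity in $\lambda$ this pins the first crossing of $\pi$ to $\lambda_{1}$. The remaining care, imposed by the non-smooth environment, is notational but essential: every differentiation in the Pr\"ufer derivation must fall on $\phi^{[1]}=e^{-W}\phi'$ and never on $e^{W}$ itself, exactly as in Proposition \ref{PropPsiEq}(i) and Lemma \ref{identidad de lagrange}, so that no derivative of $W$ is ever invoked.
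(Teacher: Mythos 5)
Your proposal is correct and follows essentially the same route as the paper's appendix: the Pr\"ufer substitution applied to the pair $(\phi,\,e^{-W}\phi')$, the identical phase equation $\theta'=e^{W}\cos^{2}\theta+2\lambda e^{-W}\sin^{2}\theta$ with $\theta(a,\lambda)=0$, monotonicity of the phase in $x$ and in $\lambda$, and the identification of $\lambda_{n}$ with the $n$-th passage of $\theta(b,\cdot)$ through $n\pi$. The only differences are cosmetic: the paper obtains monotonicity in $\lambda$ by citing a Sturm comparison theorem and anchors the base case via strict positivity of the eigenvalues (also proving separately that $w(b,\lambda)\to\infty$), whereas you differentiate the phase equation in $\lambda$ and examine the $\lambda\downarrow 0$ limit of $\phi$.
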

We left the proof in the Appendix. The classic proof of this result
uses the so-called method of Pr\"{u}fer which is based in a change
of coordinates. The original proof for the standard equation is
difficult to find in the literature, one can find it thought in
\cite{Coddington}, from where we adapted it to our situation.


Using previous two results we can to show the following theorem.

\begin{theorem} \label{teorema funcion psi y sus ceros}
Let $\lambda \in \mathbb{R}$ be fixed, and let $\psi(x,\lambda)$ be
solution of $L\psi(x,\lambda)+ \lambda \psi(x,\lambda)=0$, $x\in
(a,b)$, that satisfies $\psi(a,\lambda)=0$ and $\psi'(a,\lambda)=1$.
Then the number of zeros of the map $x\mapsto \psi(x,\lambda)$ on
$(a,b]$ equals the number of eigenvalues of $L$ less or equal to
$\lambda$.
\end{theorem}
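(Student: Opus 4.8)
The plan is to regard the zero count of $\psi(\cdot,\lambda)$ in $(a,b]$ as a function of the spectral parameter and to show that it is a non-decreasing step function that increases by exactly one each time $\lambda$ passes an eigenvalue. Write $\psi_{\lambda}:=\psi(\cdot,\lambda)$ and let $N(\lambda)$ be the number of zeros of $\psi_{\lambda}$ in $(a,b]$. Note first that, by Proposition \ref{PropPsiEq}(i), the equation $L\psi+\lambda\psi=0$ is the same as $(e^{-W}\psi')'+2\lambda e^{-W}\psi=0$, so $\psi_{\lambda}$ is precisely a solution of the operator $L_{1}$ of Theorem \ref{teorema de oscilacion} with parameter $\lambda$, vanishing at $a$.

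First I would pin down $N$ at the eigenvalues themselves. If $\lambda=\lambda_{n}$, then $\psi_{\lambda_{n}}$ solves $L\psi+\lambda_{n}\psi=0$ with $\psi(a,\lambda_{n})=0$; the eigenfunction $\phi_{n}$ solves the same equation and also vanishes at $a$, so Proposition \ref{PropWronskian} forces $\psi_{\lambda_{n}}=C\phi_{n}$ for some constant $C\neq 0$. In particular $\psi_{\lambda_{n}}(b)=0$, and by Corollary \ref{CoroZerosEigen} the function $\psi_{\lambda_{n}}$ has exactly $n-1$ zeros in the open interval $(a,b)$; adding the zero at $b$ gives $N(\lambda_{n})=n$.

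Next I would record three structural facts about the motion of the zeros. (a) Every zero of $\psi_{\lambda}$ in $[a,b]$ is simple: a double zero would, by uniqueness of the initial value problem in Proposition \ref{PropPsiEq}(ii), force $\psi_{\lambda}\equiv 0$, contradicting $\psi'(a,\lambda)=1$. (b) The zeros depend continuously on $\lambda$: the integral equation (\ref{ecuacion integral de g}) exhibits $\psi_{\lambda}(x)$ as jointly continuous in $(x,\lambda)$, so by simplicity each zero is a continuous function of $\lambda$. (c) As $\lambda$ grows the zeros drift monotonically toward $a$; this is the content of the comparison part of Theorem \ref{teorema de oscilacion}, which shows that for $\lambda'>\lambda$ the solution $\psi_{\lambda'}$ has at least as many zeros on $[a,b]$ as $\psi_{\lambda}$, so that $N$ is non-decreasing.

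From these facts the counting is forced. A zero can enter or leave the half-open interval $(a,b]$ only through an endpoint or by colliding with another zero; collision is ruled out by simplicity (a), and escape through $a$ is ruled out because $\psi(a,\lambda)=0$ together with $\psi'(a,\lambda)=1\neq 0$ keeps $a$ a simple zero that no interior zero can reach. Hence $N$ can change only when a zero crosses the right endpoint $b$, that is, exactly when $\psi_{\lambda}(b)=0$, i.e. exactly when $\lambda$ is an eigenvalue. Thus $N$ is constant on every interval free of eigenvalues, is non-decreasing by (c), equals $0$ below $\lambda_{1}$ (for very negative $\lambda$ the solution stays positive on $(a,b]$, so local constancy propagates $N\equiv 0$ up to $\lambda_{1}$), and satisfies $N(\lambda_{n})=n$ from the second paragraph; together these determine $N(\lambda)=\#\{n:\lambda_{n}\leq\lambda\}$ for every $\lambda$. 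I expect the delicate point to be verifying that exactly one zero, rather than several at once, transits $b$ at each eigenvalue, i.e. the transversality of the crossing in the non-smooth random setting; the clean way around it is to avoid analyzing the crossing by hand and instead lean on the exact values $N(\lambda_{n})=n$ supplied by Corollary \ref{CoroZerosEigen}, which, combined with monotonicity and local constancy of $N$ off the spectrum, pin the step function down uniquely.
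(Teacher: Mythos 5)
Your strategy is genuinely different from the paper's. The paper proves both inclusions directly, by a double application of the interlacing statement of Theorem \ref{teorema de oscilacion} together with Corollary \ref{CoroZerosEigen}; it never tracks how zeros move with $\lambda$, it only compares the fixed functions $\psi$, $\phi_{n}$ and $\phi_{n+1}$. You instead study the counting function $N(\lambda)$ as a step function of the spectral parameter. Most of your ingredients are sound, and the identity $N(\lambda_{n})=n$ obtained from Proposition \ref{PropWronskian} and Corollary \ref{CoroZerosEigen} is exactly right.

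The final reduction, however, does not close. The properties you propose to rely on --- $N$ non-decreasing, $N$ locally constant off the spectrum, $N(\lambda_{n})=n$, and $N\equiv 0$ below $\lambda_{1}$ --- do \emph{not} determine $N$ uniquely: the function $M$ with $M(\lambda_{n})=n$, $M\equiv n+1$ on each open gap $(\lambda_{n},\lambda_{n+1})$, and $M\equiv 0$ below $\lambda_{1}$ is non-decreasing, locally constant off the spectrum and agrees with $n$ at every eigenvalue, yet it is not the claimed count. What is missing is precisely the one-sided statement you labelled ``the delicate point'' and then set aside: you must show that $N$ is right-continuous at each $\lambda_{n}$, i.e.\ that as $\lambda$ increases past $\lambda_{n}$ the zero of $\psi_{\lambda_{n}}$ sitting at $b$ moves into $(a,b)$ and no additional zero enters through $b$. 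This does follow from your facts (a)--(c): the zero of $\psi_{\lambda_{n}}$ at $b$ is simple (a common zero of $\phi_{n}$ and $\phi_{n}'$ is excluded by the representations (\ref{mandl f}) and (\ref{mandl derivada de f})), so for $\lambda$ near $\lambda_{n}$ there is exactly one zero of $\psi_{\lambda}$ near $b$, and monotonicity forces it to remain in $(a,b]$; but this local analysis of the crossing is the substance of the step, not an optional refinement. A secondary point: simplicity of interior zeros and continuity of zeros in $\lambda$ rest on uniqueness of the initial value problem posed at an arbitrary interior point $x_{0}$, whereas the paper only establishes uniqueness (via Proposition \ref{PropWronskian}) for data posed at $a$; in this non-smooth setting you should justify it, e.g.\ through the Volterra integral equation of Proposition \ref{PropPsiEq} based at $x_{0}$. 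With these two repairs your argument becomes a valid, and arguably more conceptual, alternative to the paper's interlacing proof.
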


\begin{proof}
First, from Propostion \ref{PropPsiEq}, we know that such function
$\psi$ really exists.

The proof relies on Theorem \ref{teorema de oscilacion} and
Corollary \ref{CoroZerosEigen}. In what follows $\phi_{n}$ is the
eigenfunction associated with the $n$-eigenvalue $\lambda_{n}$, i.e.
$L\phi_{n}+\lambda_{n} \phi_{n}=0$, with
$\phi_{n}(a)=\phi_{n}(b)=0$.

Fix $\lambda$. We first suppose that there exist only $n$
eigenvalues less or equal to $\lambda$, i.e. $\lambda_{1}< ...<
\lambda_{n} \leq \lambda < \lambda_{n+1}$, and let us prove that the
map $x\mapsto \phi(x, \lambda)$ has exactly $n$ zeros in $(a,b]$. By
the Corollary \ref{CoroZerosEigen}, $\phi_{n}$ has exactly $n-1$
zeros on the open interval $(a,b)$, thus it has $n+1$ zeros in
$[a,b]$. Since $\lambda_{n} \leq \lambda$, by Theorem \ref{teorema
de oscilacion} we know that between two consecutive zeros of
$\phi_n$ there is one zero for $\psi$. Then $\psi$ has at least $n$
zeros on $(a,b]$, i.e. it has at least $n+1$ zeros on $[a,b]$.
However, if $\psi$ had $n+1$ zeros on $(a,b]$, again using Theorem
\ref{teorema de oscilacion} with $\lambda<\lambda_{n+1}$, the $n+2$
zeros of $\psi$ on $[a,b]$ would imply that $\phi_{n+1}$ had $n+1$
zeros on $(a,b)$, which is not the case. We conclude that $\psi$ has
exactly $n$ zeros on $(a,b]$.

On the other hand, we now suppose that $\psi$ has exactly $n$ zeros
in $(a,b]$. Let us now show that there exist only $n$ eigenvalues
less or equal that $\lambda$. Suppose by contradiction that the
eigenvalue $n+1$ is less or equal to $\lambda$, i.e. $\lambda_{n+1}
\leq \lambda$. If $\lambda = \lambda_{n+1}$ we have that $\psi =
\phi_{n+1}$, and by the Corollary \ref{CoroZerosEigen} $\psi$ has
$n$ zeros in $(a,b)$, and since $\phi_{n+1}(b)=0$, we obtain that
$\psi$ has $n+1$ zeros in $(a,b]$, which is a contradiction. If
$\lambda_{n+1} < \lambda$, by the Corollary \ref{CoroZerosEigen} we
know that $\phi_{n+1}$ has $n+2$ zeros in $[a,b]$. Now, by Theorem
\ref{teorema de oscilacion} we have that $\psi$ should have at least
$n+2$ in $[a,b]$, this implies that $\psi$ has $n+1$ zeros or more
in $(a,b]$, which is again a contradiction.

We know now that if $\psi$ has $n$ zeros in $(a,b]$, the $n+1$
eigenvalue satisfies  $\lambda < \lambda_{n+1}$. We will now show
that $\lambda_{n} \leq \lambda$, i.e. there exist only $n$
eigenvalues less or equal to $\lambda$.

Suppose that $\lambda < \lambda_{n}$. Recall that we are supposing
that $\psi$ has $n$ zeros in $(a,b]$, since $\psi(a)=0$, it has
$n+1$ zeros in $[a,b]$. Again, appealing to the Theorem \ref{teorema
de oscilacion}, since $\phi_{n}(a)=\phi_{n}(b)=0$, we have that
$\phi_{n}$ has at least $n+2$ zeros in $[a,b]$. We are saying that
$\phi_{n}$ has $n$ zeros or more in $(a,b)$, which contradicts
Corollary \ref{CoroZerosEigen}. And the proof is completed.
\end{proof}

\begin{remark}
Let us give a characterization of function $\psi$ of previous
theorem, i.e. $\psi$ such that
$$L\psi(x,\lambda)+ \lambda \psi(x,\lambda)=0,\ x\in (a,b)$$
with $\psi(a,\lambda)=0$ and $\psi'(a,\lambda)=1$.

From ii) of Proposition \ref{PropPsiEq}, $\psi$ is solution of the
equation
$$\psi(x,\lambda)=-2\lambda \displaystyle\int_{a}^{x}
\int_{a}^{y}\psi(z,\lambda)e^{-W(z)}e^{W(y)}dzdy+\psi(a,\lambda)+
\displaystyle\frac{\psi'(a,\lambda)}{e^{W(a)}}
\int_{a}^{x}e^{W(z)}dz.$$

By differentiating and taking into account the initial conditions,
we have the following two equations,
\begin{equation}\label{EqPsi}
\psi(x,\lambda)=-2\lambda \displaystyle\int_{a}^{x}
\int_{a}^{y}\psi(z,\lambda)e^{-W(z)}e^{W(y)}dzdy+
\displaystyle\frac{1}{e^{W(a)}} \int_{a}^{x}e^{W(z)}dz,
\end{equation}
\begin{equation*}
\psi'(x,\lambda)= e^{W(x)}\cdot \left[
-2\lambda\displaystyle\int_{a}^{x}\psi(z,\lambda)e^{-W(z)}dz +
\displaystyle\frac{1}{e^{W(a)}} \right].
\end{equation*}

\end{remark}


We finally arrive to the point where it is possible to identify the
eigenvalues of the generator of $X$.
\begin{theorem}
Considering the function $\psi$ in (\ref{EqPsi}), then we have that
$\lambda$ is an eigenvalue of $L$ if and only if $\psi(b,
\lambda)=0$.
\end{theorem}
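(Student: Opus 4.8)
The plan is to prove the two implications separately, in both cases using that $\psi$ and any eigenfunction solve the same second-order equation and vanish at the left endpoint $a$, so that Proposition \ref{PropWronskian} forces them to be proportional.

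For the forward implication, I would assume $\lambda$ is an eigenvalue of $L$. Then there is a nontrivial eigenfunction $\phi$ with $L\phi + \lambda\phi = 0$ and $\phi(a) = \phi(b) = 0$. Since also $\psi(a,\lambda) = 0$, both $\phi$ and $\psi$ solve $Ly + \lambda y = 0$ on $[a,b]$ and vanish at $a$, so Proposition \ref{PropWronskian} provides a constant $C$ with $\phi = C\psi$. As $\phi$ is nontrivial while $\psi$ is nontrivial too (because $\psi'(a,\lambda) = 1$), the constant $C$ must be nonzero; hence $\psi = \phi/C$ and $\psi(b,\lambda) = \phi(b)/C = 0$.

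For the reverse implication, I would assume $\psi(b,\lambda) = 0$. By its construction $\psi$ satisfies $L\psi + \lambda\psi = 0$ and $\psi(a,\lambda) = 0$, so the hypothesis supplies the missing boundary condition and gives $\psi(a,\lambda) = \psi(b,\lambda) = 0$. It then remains to verify $\psi \in D(L)$: from Proposition \ref{PropPsiEq} the function $\psi$ belongs to $D_{0}$ and its derivative has the form $\psi'(x) = e^{W(x)} g(x)$ with $g \in C^{1}(\mathbb{R})$, which together with the two boundary conditions places $\psi$ in $D(L)$. Since $\psi'(a,\lambda) = 1$, the function $\psi$ is nontrivial, so it is a genuine eigenfunction of $L$ and $\lambda$ is an eigenvalue.

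The calculations here are light; the step needing the most care is the reverse direction, where one must confirm that the solution $\psi$ of the integral equation (\ref{EqPsi}) meets every requirement of the domain $D(L)$ --- in particular the structural condition on $\psi'$ --- so that it counts as an eigenfunction rather than a mere solution of the differential equation. One should also track nontriviality throughout: it is the normalization $\psi'(a,\lambda) = 1$ that rules out the degenerate case $C = 0$ in the Wronskian argument and keeps both directions of the equivalence honest.
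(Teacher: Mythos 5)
Your proposal is correct and follows essentially the same route as the paper: the reverse implication is the immediate observation that $\psi$ already satisfies the equation and the boundary condition at $a$, and the forward implication invokes Proposition \ref{PropWronskian} to write the eigenfunction as a constant multiple of $\psi$ and conclude $\psi(b,\lambda)=0$. Your added checks --- that the proportionality constant is nonzero because the eigenfunction is nontrivial, and that $\psi$ genuinely lies in $D(L)$ --- are points the paper leaves implicit, but they do not change the argument.
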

\begin{proof}
Since it holds $L\psi+\lambda \psi=0$ and $\psi(a,\lambda)=0$, if we
are told that $\psi(b, \lambda)=0$, then $\psi$ would be an
eigenfunction, concecuently $\lambda$ would be an eigenvalue.

Let us now suppose that $\lambda$ is an eigenvalue of $L$. If that
is the case, then there exists an eigenfunction $\varphi$, thus it
holds that $L \varphi + \lambda \varphi =0$, $\varphi(a,
\lambda)=0$, $\varphi(b, \lambda)=0$. From iii) of Proposition
\ref{PropPsiEq}, we know that $\psi$ also satisfies $L \psi +
\lambda\psi=0$, $\psi(a,\lambda)=0$ and $\psi'(a, \lambda)=1$. And
by Proposition \ref{PropWronskian}, there exists a constant $C$ such
that
$$\varphi(x, \lambda)= C \psi(x, \lambda),$$
which implies that $\psi(b, \lambda)=0$.
\end{proof}

At this point, we are providing grounds to produce a procedure to
generate eigenvalues and eigenfunctions, thus one may be able to
approximate the transition probability densities in (\ref{EqTDF}).

Now, let us produce another stochastic equation which may
additionally help to deal with eigenvalues. The ideas comes from
\cite{McKean0} (see also \cite{Ramirez1}).

For $\lambda$ fixed, consider the Riccati transform
$$P_{t}:=\displaystyle\frac{\psi'(t,\lambda)}{\psi(t,\lambda)},$$
which is valid whenever $\psi$ is not zero.

If we define
$$g(t,x):= e^{x}\cdot
\left[-2\lambda\displaystyle\int_{a}^{t}\psi(z,\lambda)e^{-W(z)}dz
+\displaystyle\frac{1}{e^{W(a)}} \right],$$ then
$\psi'(t,\lambda)=g(t,W(t))$.

Applying the It\^{o}'s formula to the function
$$h(t,x):= \displaystyle\frac{g(t,x)}{\psi(t,\lambda)},$$
and we obtain that $P_{t}$ satisfies the following stochastic
differential equation
\begin{equation}\label{eqnPt}
dP_{t}=\left(-2\lambda-P_{t}^{2}+\frac{1}{2}P_{t}  \right)dt +
P_{t}dW_{t}.
\end{equation}
The relevance of the diffusion $P_{t}$ is given for the following
theorem.

\begin{theorem} \label{valores propios y explosion de Pt}
Consider the diffusion $P_{t}$ started at $+\infty$ at $t=a$ (i.e.
$P_{a+ \epsilon} \rightarrow +\infty$ as $\epsilon \rightarrow
0^{+}$), and restarted at $+\infty$ immediately after any passage to
$-\infty$. Then the number of eigenvalues of $L$ less that $\lambda$
is equal to the number of explosions of $P_{t}$ on $(a,b]$.
\end{theorem}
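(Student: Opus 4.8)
The plan is to translate the statement about explosions of the diffusion $P_t$ into the already-solved problem of counting zeros of $\psi(\cdot,\lambda)$. The starting observation is the pathwise identity $P_t=\psi'(t,\lambda)/\psi(t,\lambda)$, which holds for a fixed environment $W$ on every time interval where $\psi$ does not vanish, and on which the It\^{o} computation leading to the Riccati equation (\ref{eqnPt}) is valid. Hence the explosions of $P_t$ can occur only at the zeros of $t\mapsto\psi(t,\lambda)$, and the whole matter reduces to understanding the behaviour of $\psi'/\psi$ across such a zero.

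First I would examine a single interior zero $t_0\in(a,b)$ of $\psi$. The key input is that $\psi$ and $\psi'$ have no common zeros: this was already used in the proof of Proposition \ref{PropWronskian} and follows from the integral representations (\ref{mandl f}) and (\ref{mandl derivada de f}). Consequently $\psi'(t_0)\neq 0$, so $\psi$ changes sign at $t_0$, and a short sign analysis shows that in both possible cases one has $P_t\to-\infty$ as $t\to t_0^-$ and $P_t\to+\infty$ as $t\to t_0^+$. This is exactly the prescribed dynamics: a passage to $-\infty$ followed immediately by a restart at $+\infty$. Likewise, the initial conditions $\psi(a,\lambda)=0$ and $\psi'(a,\lambda)=1$ give $P_t\to+\infty$ as $t\to a^+$, matching the requirement that $P_t$ be started at $+\infty$ at $t=a$. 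Thus each interior zero of $\psi$ contributes exactly one explosion, and these are the only explosions occurring strictly inside $(a,b)$.

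The remaining step is the bookkeeping at the right endpoint and the final count. By the eigenvalue characterization established just above, namely that $\lambda$ is an eigenvalue of $L$ if and only if $\psi(b,\lambda)=0$, the point $b$ is a zero of $\psi$ precisely when $\lambda$ is an eigenvalue; since at $t=b$ the time interval terminates and no restart occurs, such a terminal passage to $-\infty$ is not counted among the explosions. Therefore the number of explosions of $P_t$ on $(a,b]$ equals the number of zeros of $\psi$ in the open interval $(a,b)$. Combining Theorem \ref{teorema funcion psi y sus ceros}, which states that the number of zeros of $\psi$ on $(a,b]$ equals the number of eigenvalues less than or equal to $\lambda$, with the same eigenvalue characterization, the number of zeros of $\psi$ in $(a,b)$ equals the number of eigenvalues strictly less than $\lambda$. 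This yields the claim.

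The main obstacle I anticipate is the rigorous handling of the ``restart at $+\infty$'' convention: one must argue that the diffusion $P_t$ defined through the Riccati equation (\ref{eqnPt}), with explosions and restarts, coincides pathwise with $\psi'/\psi$ and makes no excursions to the boundary other than those produced by the zeros of $\psi$. Because, for fixed $W$, the map $\psi(\cdot,\lambda)$ is a continuous function with only isolated simple zeros, this identification is clean; but stating it carefully, together with the convention that a passage to $-\infty$ occurring exactly at $t=b$ carries no restart and is thus excluded from the count, is where the care is needed.
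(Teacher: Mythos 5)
Your proof is correct and follows exactly the route the paper indicates: the paper's own proof is only a one-line pointer to Theorem \ref{teorema funcion psi y sus ceros} and the Riccati transform (citing \cite{Mathieu}), and your argument supplies precisely the details that reduction requires --- explosions of $P_{t}=\psi'/\psi$ occur exactly at the simple zeros of $\psi(\cdot,\lambda)$ (simplicity coming from the fact that $\psi$ and $\psi'$ share no zeros), after which the zero count of Theorem \ref{teorema funcion psi y sus ceros} is converted into an eigenvalue count. The endpoint convention you flag yourself is indeed the one needed: excluding a terminal passage to $-\infty$ at $t=b$ is exactly what turns ``eigenvalues $\leq\lambda$'' into ``eigenvalues $<\lambda$'' as the statement requires, so your bookkeeping is consistent with the theorem as stated.
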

\begin{proof}
The proof is using the Theorem \ref{teorema funcion psi y sus ceros}
and the Riccati Transform. See e.g. \cite{Mathieu}.
\end{proof}

\begin{remark}
It is remarkable to see that in \cite{Kawazu97} a similar equation
than the one in (\ref{eqnPt}) arrises in the context of the first
passage time of the Brox diffusion. Indeed, such equation describes
the Laplace transform of the first passage time, and they call it
the Kotani's formula.
\end{remark}

\begin{remark}
Our intention is to continue investigating the possible connection
between the eigenvalues of the operator and the minimum values of
the Brownian motion.
\end{remark}

\section{Appendix. Proof of Propostion \ref{CoroZerosEigen}  }

We want to analyze the equation $Lx+\lambda x=0$.
We consider the equation
\begin{equation} \label{anexo 1}
2e^{-W(t)} \left[ Lx(t)+ \lambda x(t)
\right]=(e^{-W(t)}x'(t))'+2\lambda e^{-W(t)}x(t)=0, \; \; a<t<b,
\end{equation}

We are going to use the Pr\"{u}fer method, which we addapt following
the arguments in \cite{Coddington}. In this method, one first
defines $y(t)=e^{-W(t)}x'(t)$. Using (\ref{anexo 1}) we have
\begin{equation} \label{anexo 2}
x'(t)=\displaystyle\frac{y(t)}{e^{-W(t)}}, \; \; \; y'(t)= -2
\lambda e^{-W(t)} x(t).
\end{equation}
Notice that eventhough $y(t)$ is in terms of the BM $W(t)$, the
derivative $y^{\prime}(t)$ is well defined.

The big leap in this classic method is to propose the following
change of coordinates
\begin{equation} \label{anexo 3}
x(t)=r(t)\sin(w(t)), \; \; \; y(t)=r(t)\cos(w(t)).
\end{equation}

Differentiating the equations (\ref{anexo 3}) with respect to $t$ we
have
$$x'(t)=r'(t) \sin(w(t))+r(t)\cos(w(t))w'(t),$$
$$y'(t)=r'(t) \cos(w(t))- r \sin(w(t))w'(t).$$


We now use (\ref{anexo 2}), and solving for $r'$ and $w'$, we obtain
\begin{equation} \label{ecuacion r}
r'(t)= \left(\displaystyle \frac{1}{e^{-W(t)}}- 2\lambda
e^{-W(t)}\right) r(t)\sin(w(t))\cos(w(t)) ,
\end{equation}
and
\begin{equation} \label{ecuacion w}
w'(t)=\displaystyle\frac{1}{e^{-W(t)}}\cos^{2}(w(t))+2 \lambda
e^{-W(t)}\sin^{2}(w(t)).
\end{equation}

These equations with initial conditions have unique solution. If
$\phi$ is a solution of (\ref{anexo 1}) then $\phi$ has the form
$\phi(r)=r(t)\sin(w(t))$. Equation (\ref{ecuacion r}) is of the form
$r^{\prime}(t)=h(t)r(t)$, so the solution $r$ is $r(t)=e^{\int
h(u)du}$, so $r$ is non-negative on $t$.
A consequence of this is that $\phi$ vanishes only when $w$ is a
multiple of $\pi$.

Taking into account the conditions
\begin{equation}\label{condicion 1}
x(a)=0, \ x(b)=0,
\end{equation}
let $\phi(t, \lambda)$ be a nontrivial solution of (\ref{anexo 1}) and (\ref{condicion 1}).

To analyze $\phi$, we now give some properties of $w$, defined in
(\ref{ecuacion w}).

First of all, it holds $w(a,\lambda)=0.$ This is because from
formulas (\ref{anexo 2}) and (\ref{anexo 3}), one has
$$w(a,\lambda)= \tan^{-1}\left(\displaystyle
\frac{\phi(a, \lambda)}{e^{-W(a)}\phi'(a,\lambda)}
 \right)=0.$$

Second, it turns out that for $\lambda$ fixed, $w$ is increasing
function of $t$. To prove this, let us see that, using equation
(\ref{ecuacion w}), the derivative is positive. This is the case if
the eigenvalues are positive, let us check this fact. Let
$(x,\lambda)$ be an eigenpair of the generator, thus
\begin{equation} \label{ecuacion a}
(e^{-W(t)}x'(t))'+2 \lambda e^{-W(t)}x(t)=0.
\end{equation}
Multiplying (\ref{ecuacion a}) by $x(t)$ we obtain
$$x(t)(e^{-W(t)}x'(t))'+2 \lambda e^{-W(t)}x^{2}(t)=0.$$
Integrating and solving for $\lambda$ we arrive at
$$\lambda= \displaystyle\frac{-\int_{a}^{b} x(t)(e^{-W(t)}x'(t))'dt}
{\int_{a}^{b}2 e^{-W(t)}x^{2}(t)dt}.$$ Integrating by parts and
using that $x(a)=x(b)=0$ we have
$$\lambda= \displaystyle\frac{\int_{a}^{b} e^{-W(t)}(x'(t))^{2}dt}
{\int_{a}^{b}2 e^{-W(t)}x^{2}(t)dt} > 0.$$


Now, for fixed $t$, let us see that $w(t,\lambda)$ is monotone
increasing function of $\lambda$. This is actually a consequence of
the following theorem (for a proof see p.210 from
\cite{Coddington}).
\begin{theorem} \label{teorema comparacion}
Let $L_{i}x:=(p_{i}x')'+g_{i}x$. And let $p_{i}$ and $g_{i}$ be
continuous functions on $[a,b]$, such that
$$0<p_{2}(t)\leq p_{1}(t), \; \; \; g_{2}(t)\geq g_{1}(t).$$
Let $L_{1}\phi_{1}=0$ and $L_{2}\phi_{2}=0$ and using the Pr\"{u}fer
method take $w_{1}$ and $w_{2}$ as in (\ref{ecuacion w}) with
$w_{2}(a)\geq w_{1}(a)$. Then
$$w_{2}(t)\geq w_{1}(t), \; \; a \leq t\leq b.$$
\end{theorem}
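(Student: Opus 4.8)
The plan is to reduce the statement to a one-dimensional differential inequality for the two Pr\"ufer angles and then close it with a Gronwall-type argument. First I would record that, by the Pr\"ufer change of coordinates behind (\ref{ecuacion w}), each angle $w_i$ is the $C^{1}$ solution of the first order equation
\begin{equation*}
w_i'(t)=\frac{1}{p_i(t)}\cos^2 w_i(t)+g_i(t)\sin^2 w_i(t)=:F_i(t,w_i(t)),
\end{equation*}
with $w_i(a)$ prescribed. The crucial feature, which is exactly what makes the method survive in our non-smooth setting, is that $F_i$ involves $p_i$ and $g_i$ only through their pointwise values and never through their derivatives; hence mere continuity of $p_i,g_i$ (in our application $p_i=e^{-W}$ and $g_i=2\lambda_i e^{-W}$, continuous but nowhere differentiable) is enough for $F_i$ to be continuous in $t$ and smooth in $w$, so the solutions $w_i$ exist and are unique pathwise for the fixed environment $W$.

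Next I would exploit the monotone dependence of $F$ on the coefficients. Since $\cos^2 w\ge 0$, $\sin^2 w\ge 0$, and the hypotheses give $1/p_2(t)\ge 1/p_1(t)>0$ together with $g_2(t)\ge g_1(t)$, one has the pointwise bound
\begin{equation*}
F_2(t,w)\ge F_1(t,w)\qquad\text{for every }(t,w).
\end{equation*}
In particular $w_2$ satisfies $w_2'=F_2(t,w_2)\ge F_1(t,w_2)$, that is, $w_2$ is a supersolution of the $F_1$-equation of which $w_1$ is the exact solution, and we are given $w_2(a)\ge w_1(a)$.

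The conclusion then follows from a standard comparison step applied to $v(t):=w_2(t)-w_1(t)$, which satisfies $v(a)\ge 0$. Writing
\begin{equation*}
v'(t)=\bigl[F_2(t,w_2)-F_1(t,w_2)\bigr]+\bigl[F_1(t,w_2)-F_1(t,w_1)\bigr],
\end{equation*}
the first bracket is $\ge 0$ by the previous paragraph, while the second equals $k(t)\,v(t)$ with $k(t)=\int_0^1 \frac{\partial F_1}{\partial w}\bigl(t,w_1+\theta v\bigr)\,d\theta$. Because $\frac{\partial F_1}{\partial w}(t,w)=\sin(2w)\bigl(g_1(t)-1/p_1(t)\bigr)$ is continuous, $k$ is bounded on the compact interval $[a,b]$. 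Hence $v'\ge k(t)v$, and multiplying by the integrating factor $\exp(-\int_a^t k(s)\,ds)$ shows that $v(t)\exp(-\int_a^t k)$ is nondecreasing; starting from a nonnegative value at $a$ this forces $v(t)\ge 0$, i.e. $w_2(t)\ge w_1(t)$ on $[a,b]$.

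The step I expect to require the most care is not any of these inequalities but the bookkeeping that keeps the argument coefficient-based: one must verify that $w_1,w_2$ really are well-defined $C^{1}$ solutions even though $p_i,g_i$ are not differentiable, and that all estimates are carried out pathwise for a fixed realization of $W$. Once it is granted that the Pr\"ufer angle equation depends on $p_i,g_i$ alone, the comparison collapses to the classical Sturm argument, and the non-smoothness of the random coefficients causes no difficulty.
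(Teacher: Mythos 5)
Your proof is correct. The paper does not actually prove this theorem itself---it defers to Coddington--Levinson, p.~210---and your argument (the Pr\"ufer right-hand side $F_i(t,w)=\frac{1}{p_i}\cos^2 w+g_i\sin^2 w$ is monotone in the coefficients, followed by the integrating-factor/Gronwall comparison for $v=w_2-w_1$) is precisely the classical proof given there, so it matches the intended approach, with the added and worthwhile observation that only continuity of $p_i,g_i$ is used.
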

In our case, $p_{1}(t)=p_{2}(t)=e^{-W(t)}$ and the function $g(t)=2
\lambda e^{-W(t)}$ is increasing in $\lambda$.
















Finally, we have this property of $w$:
\begin{lemma}
$w(b,\lambda)\rightarrow \infty$ as $\lambda \rightarrow \infty$.
\end{lemma}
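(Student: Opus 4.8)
The plan is to squeeze the Pr\"ufer angle $w(\cdot,\lambda)$ between the angles of constant-coefficient equations and show that the lower bound diverges in $\lambda$. Since the environment is fixed, $W$ is a continuous function on the compact interval $[a,b]$, so $e^{-W(t)}$ is bounded there: set $p_{\min}:=\min_{[a,b]}e^{-W(t)}>0$ and $p_{\max}:=\max_{[a,b]}e^{-W(t)}<\infty$. Writing the angle equation (\ref{ecuacion w}) as $w'=\frac{1}{p(t)}\cos^{2}w+g(t)\sin^{2}w$ with $p(t)=e^{-W(t)}$ and $g(t)=2\lambda e^{-W(t)}$, I would apply the comparison Theorem \ref{teorema comparacion} with our equation playing the role of the index-$2$ problem and the autonomous data $p_{1}\equiv p_{\max}$, $g_{1}\equiv 2\lambda p_{\min}$ playing the role of the index-$1$ problem. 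These satisfy $e^{-W(t)}\le p_{\max}$ and $2\lambda e^{-W(t)}\ge 2\lambda p_{\min}$, and since $w(a,\lambda)=0$ one may start the comparison angle $w_{1}$ at $w_{1}(a)=0$; the theorem then yields $w(b,\lambda)\ge w_{1}(b)$.

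Next I would estimate $w_{1}(b)$ explicitly. The function $w_{1}$ solves the autonomous equation $w_{1}'=A\cos^{2}w_{1}+B\sin^{2}w_{1}$ with $A=1/p_{\max}>0$ and $B=2\lambda p_{\min}>0$, so $w_{1}'>0$ and $w_{1}$ is strictly increasing; the time it needs to advance by $\pi$ is the constant
\[
T_{\lambda}:=\int_{0}^{\pi}\frac{dw}{A\cos^{2}w+B\sin^{2}w}=\frac{\pi}{\sqrt{AB}}=\pi\sqrt{\frac{p_{\max}}{2\lambda\,p_{\min}}},
\]
where the middle equality follows from the substitution $u=\tan w$. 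Consequently, over the fixed interval $[a,b]$ the angle increases by at least $\pi\lfloor (b-a)/T_{\lambda}\rfloor$, that is $w_{1}(b)\ge \pi\lfloor (b-a)/T_{\lambda}\rfloor$.

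Finally, since $T_{\lambda}\to 0$ as $\lambda\to\infty$, the integer $\lfloor (b-a)/T_{\lambda}\rfloor\to\infty$, whence $w_{1}(b)\to\infty$ and therefore $w(b,\lambda)\ge w_{1}(b)\to\infty$, which is the assertion. The one genuinely delicate point is that the coefficients carry the Brownian path, so equation (\ref{ecuacion w}) cannot be integrated directly; the device that circumvents this is exactly the uniform positivity and boundedness of $e^{\pm W(t)}$ on the compact, fixed-environment interval $[a,b]$, which lets the random equation be trapped between autonomous ones. Everything after that reduces to the classical computation of the rotation rate of a constant-coefficient Pr\"ufer angle.
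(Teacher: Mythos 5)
Your proof is correct and follows essentially the same route as the paper: both bound $e^{-W}$ on the compact interval $[a,b]$ and invoke the comparison Theorem \ref{teorema comparacion} to dominate $w(\cdot,\lambda)$ from below by the Pr\"ufer angle of a constant-coefficient problem. The only (harmless) difference is the last step, where you integrate the autonomous angle equation to get the quantitative bound $w(b,\lambda)\ge \pi\left\lfloor (b-a)\sqrt{2\lambda p_{\min}/p_{\max}}/\pi\right\rfloor$, whereas the paper reads off the growing number of zeros of the explicit sinusoidal solutions of $Px''+2\lambda Gx=0$.
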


\begin{proof}
We consider the equation (\ref{anexo 1}). Let $P,G$ be constants
such that for all $t \in [a,b]$
$$e^{-W(t)}\leq P, \; \; \; 2 \lambda e^{-W(t)} \geq G.$$

Now we consider the equation
\begin{equation} \label{ecuacion P G}
Px''(t)+2\lambda G x(t)=0,
\end{equation}
and take $v$ to be the analogous of $w$ with the condition $
v(a,\lambda)=w(a,\lambda)$. Hence, from Theorem \ref{teorema
comparacion} we have that
$$w(t,\lambda)\geq v(t, \lambda).$$

On the other hand, if $f$ is a solution of the equation
(\ref{ecuacion P G}), then we have that for large $\lambda$, $f$ is
of the form
$$f(t)=A \cos \left(\displaystyle  \sqrt{\frac{2 \lambda G}{P}}t \right)+
B \sin \left(\displaystyle  \sqrt{\frac{2 \lambda G}{P}}t \right),$$
where $A$ and $B$ are constants. This solution implies that the
zeros of $f$ increase in number when $\lambda$ is large, because the
periodicity increases. The only way to have that is because $v$ hit
multiples of $\pi$ many times. In particular we have that $v(b,
\lambda) \rightarrow \infty$ as $\lambda\rightarrow \infty$. Since
$w(b,\lambda)\geq v(b, \lambda)$, then we obtain the result.
\end{proof}

Let us see that $w(b,0)\neq\pi$. If that it not the case, the
associted function $\phi(t,0)$ should be an eigenfunction of the
eigenvalue $\lambda=0$. However, as we mentioned before the
eigenvalues are extrictly positive.  Now, since $w(t,0)$ is a
continuous function strictly increasing and $w(a,0)=0$, then we
conclude that $ 0< w(b,0) <\pi$.

On the other hand, since $w(b,\lambda)$ is increasing in $\lambda$
and $w(b,\lambda)\rightarrow \infty$ as $\lambda \rightarrow
\infty$, therefore there exists a first value $\lambda_{1} > 0 $
such that $w(b,\lambda_{1})= \pi.$

Since $w$ is increasing in $t$ we have that
$$0 = w(a,\lambda_{1}) <
w(t,\lambda_{1})< w(b,\lambda_{1}) = \pi,\; \; a<t<b.$$

Then we have that $w(t,\lambda_{1})$ is not a multiple of $\pi$,
hence the solution $\phi(t,\lambda_{1})$ does not vanish in $(a,b)$,
and this function $\phi(t,\lambda_{1})$ is the eigenfunction
associated to the first eigenvalue $\lambda_{1}$.

In the same way, there exist $\lambda_{2}> \lambda_{1}$ such that
$w(b,\lambda_{2})= 2\pi.$ Then the function $\phi(t,\lambda_{2})$ is
the eigenfunction associated with the eigenvalue $\lambda_{2}$ and
has only one zero in $(a,b)$, precisely because $w$ touches only
once the value $\pi$. And the very same reasoning follows to
conclude that the $n$th eigenfunction has exactly $n-1$ zeros in
$(a,b)$.


\end{document}